\newcommand{\defeq}{\mathrel{\overset{\makebox[0pt]{\mbox{\normalfont\tiny\sffamily def}}}{=}}}
\newtheorem{thm}{Theorem}[section]
\newtheorem{thmnonumber*}{Theorem}
\newtheorem{theorem}[thm]{Theorem}
\newtheorem{lemma}[thm]{Lemma}
\newtheorem{proposition}[thm]{Proposition}
\newtheorem{corollary}[thm]{\bf Corollary}
\theoremstyle{definition}
\newtheorem{definition}[thm]{Definition}
\newtheorem{example}[thm]{Example}
\newtheorem{remark}[thm]{Remark}
\newcommand{\K}{\mathbb{K}}
\newcommand{\PP}{\mathbb{P}}
\newcommand{\mm}{\mathfrak{m}}
\newcommand{\nn}{\mathfrak{n}}
\newcommand{\pp}{\mathfrak{p}}
\newcommand{\reg}{\operatorname{reg}}
\newcommand{\Proj}{\operatorname{Proj}}
\newcommand{\OOO}{\mathscr{O}}
\newcommand{\III}{\mathscr{I}}
\date{February 9, 2017}
\begin{document}

\title{Regulating Hartshorne's connectedness theorem \\}
\vskip7mm
\author{Bruno Benedetti \thanks{Supported by NSF Grant  1600741, ``Geometric Combinatorics and Discrete Morse Theory'', and by the DFG Collaborative Research Center TRR 109, ``Discretization in Geometry and Dynamics''.}\\
\small Univ.~of Miami, USA\\
\small bruno@math.miami.edu\\
\and Barbara Bolognese \\
\small Univ.~of Sheffield, United Kingdom\\
\small b.bolognese@sheffield.ac.uk\\ \phantom{A}
\and Matteo Varbaro \thanks{Supported by PRIN  2010S47ARA\_003 ``Geometria delle Variet\`a Algebriche".}\\
\small Univ.~degli Studi di Genova, Italy\\
\small varbaro@dima.unige.it\\}
\maketitle
\vskip9mm

\begin{abstract}
A classical theorem by Hartshorne states that the dual graph of any arithmetically Cohen--Macaulay projective scheme is connected.
We give a quantitative version of Hartshorne's result, in terms of Castelnuovo--Mumford regularity. If $X \subset \mathbb{P}^n$ is an arithmetically Gorenstein projective scheme of regularity $r+1$, and if every irreducible component of $X$ has regularity $\le r'$, we show that the dual graph of $X$ is  
$\lfloor{\frac{r+r'-1}{r'}}\rfloor$-connected. The bound is sharp.

We also provide a strong converse to Hartshorne's result: Every connected graph is the dual graph of a suitable arithmetically Cohen--Macaulay projective curve of regularity $\le 3$, whose components are all rational normal curves. The regularity bound is smallest possible in general.

Further consequences of our work are:
\begin{compactenum}[(1)]
\item Any graph is the Hochster--Huneke graph of a complete equidimensional local ring. (This answers a question by Sather--Wagstaff and Spiroff.)
\item The regularity of a curve is not larger than the sum of the regularities of its primary components.\\
\end{compactenum}
\end{abstract}

\vskip9mm

\section{Introduction}

Intersection patterns of projective curves are a classical topic in algebraic geometry. 
The \emph{dual graph} of a scheme is obtained by taking as many vertices as the irreducible components, and by connecting two distinct vertices with a single edge whenever the two corresponding components intersect in a subscheme of dimension one less than the scheme.

In 1962, Hartshorne showed that arithmetically Cohen--Macaulay projective schemes have connected dual graphs~\cite[Theorem 18.12]{Ei}. (The original statement by Hartshorne is slightly more general, cf.~Remark \ref{rem:Ha} below). This result, henceforth called \textsc{Hartshorne's Connectedness Theorem}, triggers two nontrivial questions:

\vskip2mm
\begin{compactenum}
\item \emph{(Inverse problem)} Do all connected graphs arise this way? If so, how to reconstruct a (nice) projective scheme from its intersection pattern?
\item \emph{(Quantitative problem)} Under extra algebraic parameters, can we strengthen the combinatorial conclusion quantitatively? (e.g.\ can we bound the graph diameter, connectivity, or expansion?)
\end{compactenum}

\vskip2mm
\enlargethispage{-3mm}

It is not difficult to show that any connected graph is indeed dual to some algebraic curve, as elegantly explained in Koll\`ar \cite{Kollar}. Here we prove the following:

\vspace{8mm}

\noindent{\bf Theorem} (Theorem \ref{thm:CMfromGraph})
Any connected graph $G$ is dual to an arithmetically Cohen--Macaulay reduced curve $C_G \subset \mathbb{P}^n$, of regularity $\le 3$, with the additional property that all irreducible components of $C_G$ are rational normal curves (in their span). In addition, all of the singular points of $C_G$ have multiplicity 2 and $C_G$ is locally a complete intersection. 
\vspace{3mm}

The embedding $C_G\subset \PP^{n}$ we provide is explicit, and optimal in two aspects. First of all, if $C\subset \PP^n$ is a projective curve of regularity 2 whose points have multiplicity $\leq 2$, then the dual graph of $C$ must be a tree, cf.~Remark \ref{small schemes}. So regularity 3 is the smallest possible in general. Secondly, realizing every graph with irreducible components of regularity $1$ is not possible, since some graphs are not dual to any line arrangement, cf.~Proposition~\ref{prp:hierarchy}. Here we realize the components with rational normal curves, which have regularity $\le 2$.

As for the quantitative problem, some progress was obtained in 2014 by the first and third author, who proved that if $X \subset \mathbb{P}^n$ is an arithmetically Gorenstein (reduced) subspace arrangement  of regularity $r+1$, then the dual graph of $X$ is  $r$-connected \cite[Theorem\ 3.8]{BV}.  

Can one extend this result from subspace arrangements to arbitrary projective schemes? At first, the answer seems negative: For example, there are arithmetically Gorenstein curves of high regularity whose dual graph is a path, so not even $2$-connected, cf.~\cite[Examples 3.4 and 5.10]{BV}. 

In the present paper we bypass these difficulties and show that the result does extend; the conclusion ``$r$-connected'' should be replaced with ``$\lfloor{\frac{r+r'-1}{r'}}\rfloor$-connected'', where $r'$ is the maximum regularity of a primary component of $X$. Under the additional assumption that $X$ is reduced, the theorem works also if $r'$ is the maximum degree of an irreducible component of $X$:

\vspace{4mm}

\noindent{\bf Main Theorem} (Theorem \ref{thm:1} \& Corollary \ref{cor:1degreeversion})
Let $X$ be an arithmetically Gorenstein projective scheme of regularity $r+1$. 
\begin{compactenum}[\rm (A)]
\item If every primary component of $X$ has regularity $\le r'$, the dual graph of $X$ is  $\lfloor{\frac{r+r'-1}{r'}}\rfloor$-connected.
\item If every irreducible component of $X$ has degree $\le D$, and $X$ is reduced, the dual graph of $X$ is  $\lfloor{\frac{r+D-1}{D}}\rfloor$-connected.
\end{compactenum}

\vspace{3mm}
One key to this result is a subadditivity lemma: We show that the regularity of a curve is not larger than the sum of the regularities of its primary components (Lemma \ref{lem:EGrevisited}). For line arrangements this follows by the work of Derksen and Sidman \cite{DS}. Unlike Derksen--Sidman's bound, though, our bound does not extend to higher dimensions: Compare Example \ref{ex:aldo}. 

Since every complete intersection $X\subset \PP^n$, defined by equations $f_1,\ldots ,f_c$, is arithmetically Gorenstein of regularity $\deg f_1+\ldots +\deg f_c-c+1$, our Main Theorem yields a rigidity condition for the possible configurations of the irreducible components of a complete intersection (cf.~e.g.~Corollary \ref{cor:ci}).

For subspace arrangements, that is when $r'=1$, the bound of the Main Theorem is sharp by~\cite[Example 3.13]{BV}. With some computational effort, we are able to provide examples of non-linear arrangements (i.e. $r'>1$) where the bound is still sharp, cf.\ Section \ref{sec:sharpness}.

\begin{remark} \label{rem:Ha}
In Hartshorne's connectedness theorem, the \emph{assumption} ``$X\subseteq \PP^n$ is arithmetically Cohen-Macaulay" depends on the embedding, while the \emph{conclusion} on the connectivity of the dual graph of $X$ does not. 
It is worth mentioning that Hartshorne's original result \cite[Theorem 2.2]{Ha} assumes a more general condition on $X$ that \emph{is} intrinsic, namely, ``$X$ is a connected projective scheme such that $\mathcal{O}_{X,x}$ satisfies Serre's condition $S_2$ for all $x\in X$''. Any arithmetically Cohen-Macaulay projective scheme of positive dimension is connected (and satisfies $S_2$ locally). 
For simplicity, we preferred to state Hartshorne's result in Eisenbud's version~\cite[Theorem 18.12]{Ei}. Moreover, both in \cite[Theorem 2.2]{Ha} and in \cite[Theorem 18.12]{Ei}, the conclusion is that removing a subset of codimension $\geq 2$ will not disconnect $X$. This is equivalent to say that the dual graph of $X$ is connected by \cite[Proposition 1.1]{Ha}.
\end{remark}

\section{Glossary} In the present paper, all fields are assumed to be infinite. 

The {\it Castelnuovo-Mumford regularity} of a projective scheme $X\subset \PP^n$ over a field $\K$, denoted by $\reg X$, is the least integer $k$ such that $H^i(X,\III_X(k-i))=0$ for all  $i\geq 1$, where $\III_X\subset \OOO_{\PP^n}$ is the sheaf of ideals associated to the embedding $X\subset \PP^n$. If $S \defeq \K[x_0,\ldots ,x_n]$ and $\mm$ is the irrelevant ideal of $S$, we denote by $I_X$ the unique saturated ideal of $S$ such that $X=\Proj(S/I_X)$; in other words, 
\[I_X=\oplus_{z\in \mathbb{Z} } \, \Gamma(X,\III_X(z)).\] 
The Castelnuovo--Mumford regularity of a finitely generated $\mathbb{Z}$-graded $S$-module $M$, is defined by
\[\reg M \defeq \max\{i+j:H_{\mm}^i(M)_j\neq 0\}=\max \{j-i:\mathrm{Tor}_i^S(M,\K)_j\neq 0\}.\]
The two definitions are compatible: If $X\subset \PP^n$ is a projective scheme, then $I_X$ is an $S$-module and one has $\reg X=\reg I_X$. For further details, see e.g.\ \cite{Eisenbud}.

We say that $X \subset \mathbb{P}^n$ is {\it arithmetically Cohen-Macaulay} (resp. {\it arithmetically Gorenstein}) if $S/I_X$ is a Cohen-Macaulay (resp. Gorenstein) ring. We say that $X \subset \mathbb{P}^n$ is \emph{locally Cohen-Macaulay} (resp. \emph{locally Gorenstein}) if the stalk $\OOO_{X,x}$ is a Cohen-Macaulay (resp. Gorenstein) local ring for any $x\in X$. For both the Cohen--Macaulay and the Gorenstein property, ``arithmetically'' is much stronger than ``locally''.

We say that $X=X_1\cup X_2 \cup \ldots \cup X_s$ is a {\it primary decomposition} of $X$ if $I_X=I_{X_1}\cap I_{X_2}\cap \ldots \cap I_{X_s}$ is a primary decomposition of $I_X$. The $X_i$'s are called \emph{primary components}; in this paper, by \emph{irreducible components} we mean the reduced schemes associated to the primary components. Primary decompositions need not be unique. However, they are unique if $X$ has no embedded components; this is always the case if $X\subset \PP^n$ is arithmetically Cohen-Macaulay. 

If $X_1,\ldots ,X_s$ are the irreducible components of $X$, the {\it dual graph} $G(X)$ is the graph  whose vertices are $\{1,\ldots ,s\}$ and such that $\{i,j\}$ is an edge if and only if $X_i\cap X_j$ has dimension $\dim X-1$. All graphs considered in this paper are simple, i.e.\ without loops or multiple edges.  As the one-point graph is trivial to handle (it is dual to $\PP^1$, for example), we will only consider graphs with at least two vertices. The \emph{degree} $\deg v$ of a vertex $v$ of $G$  is the number of edges containing $v$.
A graph $G$ is called \emph{$k$-connected} (with $k$ a positive integer) if $G$ has at least $k+1$ vertices, and the deletion of less than $k$ vertices from $G$, however chosen, does not disconnect it.

\section{From graphs to curves}
For this section, $\K$ will be algebraically closed. Let $G$ be a graph on $s>1$ vertices, labeled by $1, \ldots, s$, and let $E(G)$ be the set of edges of $G$. We learned the following argument to produce a projective curve whose dual graph is $G$ from \cite{Kollar}. Pick $s$ distinct lines $L_1,\ldots ,L_s\subset \PP^2$ such that no three of them meet at a common point, and set $P_{ij} \defeq L_i\cap L_j$ for all $i\neq j$.

 Let $X$  be the blow-up  of $\PP^2$ along $\bigcup_{\{i,j\}\notin E(G)}P_{ij}$, and let $C_G$ be the strict transform of $\bigcup_{i=1}^sL_i$. By construction, $C_G$ is a projective curve whose dual graph is $G$. We also denote by $C_i$ the strict transform of $L_i$ for any $i=1,\ldots ,s$. By the blow-up closure lemma, $C_G$ is isomorphic to the blow-up of $\bigcup_{i=1}^sL_i$ along $\bigcup_{\{i,j\}\notin E(G)}P_{ij}$.

Since $\bigcup_{i=1}^sL_i$ is reduced and locally a complete intersection, and since we are blowing up only ordinary double points, the curve $C_G$ is also reduced and locally a complete intersection. 
The goal of this section is to describe an embedding of $C_G$ that is arithmetically Cohen--Macaulay if $G$ is connected. In fact, we will see that much more is true.

\bigskip
\textbf{The embedding.} Let us write each line $L_i\subset \PP^2$ as $\ell_i=0$, for a linear form $\ell_i \in S=\K[x,y,z]$. The condition  that no three of the $L_i$ meet at a common point, means that any three of the $\ell_i$ are linearly independent.
The defining ideal of $\bigcup_{\{i,j\}\notin E(G)}P_{ij}\subset \PP^2$ is:
\[I \defeq  \bigcap_{\{i,j\}\notin E(G)}(\ell_i,\ell_j)  \, \subset \, S.\]
For $d \in \mathbb{N}$, let $I_d$ be the $\K$-vector space of the degree-$d$ elements of $I$, $R[d]$ the $\K$-subalgebra of $S$ generated by $I_d$, and
\[A[d] \defeq \frac{R[d]}{(\ell_1\ell_2\cdots \ell_s)\cap R[d]}.\]
Finally, for any $\mathbb{Z}$-graded ring $T$ and any positive integer $e$, we denote by $T^{(e)}=\bigoplus_{k\in\mathbb{Z}}T_{ke}$ the $e$-th Veronese of $T$.

\begin{theorem} \label{thm:CMfromGraph}
Let $G$ be a connected graph on $s$ vertices. With the  notation above, assume that $d \ge \binom{s-1}{2}$. Then
\begin{compactenum}[\rm (i)]
\item $\Proj(A[d])=C_G$; 
moreover, $\reg A[d] \le |E(G)|-s+2$.
\item 
For each $i=1, \ldots, s$, the irreducible component $C_i$ of $\Proj (A[d])$ corresponding to the vertex $i$ of $G$ is a rational normal curve (in its span) of degree $\deg i+d-s+1$.
\item If in addition $e\ge \reg A[d]$,
then $A[d]^{(e)}$ is Cohen--Macaulay; moreover, the regularity of $\Proj(A[d]^{(e)})$ is equal to $2$ if $G$ is a tree, and to $3$ otherwise; and the irreducible components of $\Proj(A[d]^{(e)})$ are rational normal curves of degree $e(\deg i+d-s+1)$.
\end{compactenum}
\end{theorem}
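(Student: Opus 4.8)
The plan is to read the whole construction through the rational map $\phi_d\colon \PP^2\dashrightarrow \PP^N$ (with $N=\dim_\K I_d-1$) defined by a basis of $I_d$, whose base locus is exactly $\{P_{ij}:\{i,j\}\notin E(G)\}$. First I would check that the hypothesis $d\ge\binom{s-1}{2}$ forces $d\ge \reg I$, so that $I$ is generated in degrees $\le d$ and $\phi_d$ is resolved by the blow-up $X$ of $\PP^2$ at these points, the resulting morphism $X\to\PP^N$ being a \emph{closed embedding} (the class $dH-\sum E_{ij}$ becomes very ample once $d$ is this large). Since $R[d]=\K[I_d]$ with $I_d$ placed in degree $1$, this identifies $\Proj R[d]$ with $X$; and since $A[d]$ is by definition the image of $R[d]$ in $S/(\ell_1\cdots\ell_s)$, $\Proj A[d]$ is the image of $\bigcup_i L_i$, i.e.\ the strict transform $C_G$. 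This yields the scheme-theoretic half of (i), and shows $A[d]$ is standard graded, so that the regularity/Veronese machinery applies below.

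For (ii) I would restrict the system to a single line. On $L_i\cong\PP^1$ the forms in $I_d$ cut out $\OOO_{\PP^1}(d)$ twisted down by the base points lying on $L_i$, namely the $P_{ij}$ with $\{i,j\}\notin E(G)$; there are exactly $(s-1)-\deg i$ of these. Hence $C_i$ is embedded by a subsystem of $|\OOO_{\PP^1}(\deg i+d-s+1)|$. The one remaining point is \emph{completeness} of this subsystem, i.e.\ surjectivity of $H^0(X,\OOO(1))\to H^0(C_i,\OOO(1)|_{C_i})$, which follows again from $d\ge\binom{s-1}{2}$ via vanishing of the relevant $H^1$ of the ideal of points. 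Completeness makes $C_i$ a rational normal curve of degree $\deg i+d-s+1$ spanning a $\PP^{\deg i+d-s+1}$.

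The heart of the proof, and the step I expect to be the main obstacle, is the regularity estimate $\reg A[d]\le |E(G)|-s+2$ in (i). I would compare $A[d]$ with the coordinate rings of its components through the normalization $\nu\colon\bigsqcup_i C_i\to C_G$, giving a short exact sequence $0\to\OOO_{C_G}\to\bigoplus_i\OOO_{C_i}\to\QQQ\to 0$, where $\QQQ$ is a skyscraper of length $|E(G)|$ supported on the nodes (one per edge, each an ordinary double point). Taking cohomology of the twists and using that each $C_i$ is a rational normal curve gives the arithmetic-genus identity $h^1(\OOO_{C_G})=|E(G)|-s+1=b_1(G)$, which via $H^2_{\mm}(A[d])_j\cong H^1(\OOO_{C_G}(j))^\vee$ controls the top local cohomology. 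The delicate part is bounding the \emph{lower} local cohomology $H^0_{\mm}(A[d])$ and $H^1_{\mm}(A[d])$, which measure the failure of saturation and of projective normality of this particular degree-$d$ embedding; I would control these by the same sequence together with the $2$-regularity of the rational normal components, so that the combinatorial quantity $b_1(G)$ governs the spread and the total regularity comes out as $b_1(G)+1=|E(G)|-s+2$.

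Finally, for (iii), for $e\ge\reg A[d]$ I would invoke $H^i_{\mm}(A[d]^{(e)})_k=H^i_{\mm}(A[d])_{ke}$: the intermediate local cohomology of $A[d]$ is concentrated in degrees $<\reg A[d]\le e$ and vanishes in nonpositive degrees, so every graded piece sitting in a nonzero multiple of $e$ dies, forcing $\depth A[d]^{(e)}\ge 2$, i.e.\ $A[d]^{(e)}$ Cohen--Macaulay. The $e$-th Veronese re-embeds $C_G$ by $\OOO_{C_G}(e)$, multiplying each component degree by $e$, so $C_i$ becomes a rational normal curve of degree $e(\deg i+d-s+1)$ (completeness on each component now follows from the Cohen--Macaulay/projectively normal property). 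Since the curve is now arithmetically Cohen--Macaulay, one has $\reg\big(\Proj A[d]^{(e)}\big)=\min\{m:H^1(\OOO_{C_G}(m-2))=0\}$, and because $H^1(\OOO_{C_G}(j))=0$ for $j\ge 1$ while $H^1(\OOO_{C_G})=b_1(G)$, this regularity equals $2$ exactly when $G$ is a tree ($b_1(G)=0$) and $3$ otherwise.
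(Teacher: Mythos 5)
Your handling of the scheme-theoretic identification in (i), of part (ii), and of part (iii) runs parallel to the paper's proof: the paper likewise uses $d\ge\reg I$ together with \cite[Lemma 1.1]{CH} to identify $\Proj(R[d])$ with the blow-up, and proves (iii) via the Goto--Watanabe isomorphism $H^i_{\nn'}(A[d]^{(e)})\cong\bigoplus_{k}H^i_{\nn}(A[d])_{ke}$; your computation $p_a(C_G)=|E(G)|-s+1$ via the normalization sequence is a legitimate substitute for the paper's genus-of-blow-up formula from Perrin. (One secondary omission in (iii): the vanishing $H^1_{\nn}(A[d])_0=0$, which you need for ``vanishes in nonpositive degrees,'' is exactly where connectedness of $G$, reducedness, and $\K$ algebraically closed enter; the paper makes this step explicit, you do not.)

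The genuine gap is the bound $\reg A[d]\le|E(G)|-s+2$ in (i), which you yourself flag as the main obstacle and then do not prove. Your normalization sequence $0\to\OOO_{C_G}\to\bigoplus_i\OOO_{C_i}\to\QQQ\to 0$ controls only the top local cohomology, i.e.\ $H^2_{\mm}(A[d])_j\cong H^1(C_G,\OOO_{C_G}(j))$, and hence the genus. But the regularity is governed also by $H^1_{\mm}(A[d])_j\cong\mathrm{coker}\bigl(A[d]_j\to H^0(C_G,\OOO_{C_G}(j))\bigr)$, the failure of linear/projective normality of this particular embedding, and nothing in your sketch forces this cokernel to vanish for $j\ge|E(G)|-s+2$: the assertion that ``$b_1(G)$ governs the spread'' \emph{is} the statement to be proved, not a consequence of the sequence. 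Closing this hole would require either (a) an appeal to a Gruson--Lazarsfeld--Peskine/Giaimo-type theorem, or (b) a genuine projective-normality argument for $A[d]$ (e.g.\ linear normality via interpolation at the nodes, surjectivity of $A[d]_2\to H^0(\OOO_{C_G}(2))$ via cohomology on the blow-up together with a comparison of $I^2$ with the symbolic square $I^{(2)}$, then Mumford's Castelnuovo lemma to propagate to higher degrees); neither appears in your proposal. The paper sidesteps the entire issue: since $\Proj(A[d])$ is a reduced connected curve, it invokes Giaimo's theorem \cite{Giaimo} (the Eisenbud--Goto conjecture in this case), $\reg\le\deg-\mathrm{codim}+1$, after which part (i) reduces to two elementary Hilbert-function computations --- $\deg A[d]=sd-s^2+s+2|E(G)|$ and $\dim_\K A[d]_d$ --- whose difference yields exactly $|E(G)|-s+2$. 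As written, your proof of (i) is incomplete at its central quantitative claim.
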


\begin{proof}
\textsc{Part} (i): Notice that $\binom{s-1}{2} = \binom{s}{2}-s+1$ is greater than or equal to the degree of $\bigcup_{\{i,j\}\notin E(G)}P_{ij}$, which is $\binom{s}{2}-|E(G)|$. The latter is certainly bigger than the regularity of $I$, since $I$ is an ideal of points. In particular $d\ge \binom{s-1}{2}$ is bigger than the highest degree of a minimal generator of $I$. This implies that $R[d]$ is a coordinate ring of the blow-up $X$; for a proof of such implication see for example \cite[Lemma 1.1]{CH}. As a consequence, if $d\ge \binom{s-1}{2}$, then $A[d]$ is a coordinate ring of $C_G$. 
Moreover, the degree of the strict transform of $L_i$, with respect to the embedding given by $A[d]$, is $d-s+1+\deg i$. 
Therefore, 
\[\deg A[d] = \sum_{i=1}^s (d-s+1+\deg i) = s (d-s+1) + \sum_{i=1}^s \deg i =  sd-s^2+s+2|E(G)|.\] Since $\Proj(A[d])$ is a reduced connected projective curve, the Eisenbud-Goto conjecture holds \cite{Giaimo}, so 
\begin{equation}\label{eq1}
\reg(A[d])\le \deg A[d] -\dim \mathrm{Span} \Proj(A[d]) +1 = sd-s^2+2|E(G)|+s +2-\dim_\mathbb{K}A[d]_d.
\end{equation}
Yet $A[d]$ is isomorphic to the $\K$-subalgebra $B[d]$ of $S/(\ell_1\ell_2\cdots \ell_s)$ generated by the degree-$d$ part of the ideal $I/(\ell_1\ell_2\cdots \ell_s)$. So:
\begin{equation}\label{eq2}
\dim_\mathbb{K}A[d]_d=\dim_\mathbb{K}B[d]_d=\dim_\mathbb{K}\left (\frac{I}{(\ell_1\ell_2\cdots \ell_s)} \right)_d=\dim_\mathbb{K}\left(\frac{S}{(\ell_1\ell_2\cdots \ell_s)}\right)_d-\dim_\mathbb{K}\left(\frac{S}{I}\right)_d.
\end{equation}
\noindent The Hilbert function of complete intersections is well-known: Since $d\ge \binom{s-1}{2} \ge s-2 =\reg(\ell_1\ell_2\cdots \ell_s)-2$,  
\begin{equation}\label{eq3}
\dim_\mathbb{K}\left(\frac{S}{(\ell_1\ell_2\cdots \ell_s)}\right)_d=sd-\frac{s^2}{2}+\frac{3s}{2}.
\end{equation}
\noindent Also, since $d\ge \binom{s}{2}-s+1\geq \reg I-1$, the dimension of $(S/I)_d$ is the number of points defined by $I$, so
\begin{equation}\label{eq4}
\dim_\mathbb{K}\left(\frac{S}{I}\right)_d=\binom{s}{2}-|E(G)|.
\end{equation}
Putting together Equations \eqref{eq1}, \eqref{eq2}, \eqref{eq3} and \eqref{eq4}, we conclude. 

\medskip
\noindent \textsc{Part}  (ii):  For each $i$, notice that the coordinate ring of  the strict transform $C_i \subset \Proj(R[d])$ of the line $L_i$ is isomorphic to the $\K$-subalgebra of $S/(\ell_i)$ generated by the degree-$d$ part of the ideal $\frac{I+(\ell_i)}{(\ell_i)}$. Such an ideal is generated by a homogeneous polynomial $f$; explicitly, $f$ is the (image of) the product of the $\ell_j$ such that $\{i,j\}\notin E(G)$. Of course, $d \ge \binom{s-1}{2} \ge \deg(f)=s-1-\deg i$, so in this embedding $C_i \subset \Proj(A[d])$ is a rational normal curve of degree $d-s+\deg i+1$ in its span.

\medskip
\noindent \textsc{Part}  (iii): It is well known that $\Proj (A[d]^{(e)})= \Proj (A[d])$ for any positive integer $e$. We shall freely make use of the following graded iso\-morphism relating the local cohomology of $A[d]$ with that of its $e$-th Veronese, cf.~\cite[Theorem 3.1.1]{GW}: 
\[H_{\nn'}^i(A[d]^{(e)})\cong \bigoplus_{k\in\mathbb{Z}}H_{\nn}^i(A[d])_{ke} \ \ \ \forall \ i\in \mathbb{N},\]
where by $\nn$ and $\nn'$ we denote the irrelevant ideals of $A[d]$ and $A[d]^{(e)}$, respectively.

Since $A[d]$ is reduced, we have $H^0_{\nn}(A[d])=0$. Furthermore, for any positive integer $k$ we have $H^1_{\nn}(A[d])_{-k}=H^0(\Proj(A[d]),\mathcal{O}_{\Proj(A[d])}(-k))=0$, since negative twists of an ample line bundle over $C_G$ do not have global sections. By definition of regularity, $H^1_{\nn}(A[d])_k=0$ also for $k\geq \reg(A[d])$. 
Finally, since $A[d]$ is reduced and $\K$ is algebraically closed, the connectedness of $G$ implies  $H^1_{\nn}(A[d])_0=0$ .
Thus after applying an $e$-th Veronese with $e\geq \reg(A[d])$, all the undesired nonzero cohomologies disappear; the Cohen--Macaulay property of $A[d]^{(e)}$ follows.  
Since $\dim (A[d]) = 2$, the regularity of the $e$-th Veronese, with $e\ge \reg(A[d])-1$, is at most $2$; also, it is at least $1$, because $A[d]^{(e)}$ is not a polynomial ring (we are assuming $s>1$). So the regularity of the $e$-th Veronese is equal to $2$ if and only if $H^2_{\nn}(A[d]^{(e)})_0\neq 0$. Let us consider the arithmetic genus  $p_a(C_G) = \dim_{\K} H^2_{\nn}(A[d])_0$. This integer does not depend on the embedding of $C_G$. It is related to the arithmetic genus of $H \defeq \bigcup_{i=1}^sL_i$ via the following formula:
\[p_a(C_G)=p_a(H)-\sum_{\{i,j\}\notin E} \binom{\mu_{P_{ij}}(H)}{2},\]
where $\mu_{P_{ij}}(H)$ is the multiplicity of $P_{ij}$ on $H$. (See e.g.\ \cite[Theorem 5.9]
{Perrin} for a proof of the formula in the irreducible case; the same proof works also for reducible curves with same number of connected components; the two curves $C$ and $X$ of Perrin's notation are in our case the curves $H$ and $C_G$, respectively.) But all such multiplicities are 2, and $p_a(H)=\binom{s-1}{2}$, so the above equation can be rewritten as:
\[p_a(C_G)=\binom{s-1}{2}-\binom{s}{2}+|E(G)| = 1-s+|E(G)|.\]
Yet any connected graph has at least $s-1$ edges, with equality for trees. So $G$ is not a tree if and only if $p_a(C_G)=\dim_{\K}H^2_{\nn}(A[d]^{(e)})_0> 0$, if and only if $\reg (A[d]^{(e)}) = 2$. The last claim follows from the fact that the $e$-th Veronese sends rational normal curves to rational normal curves, while the degree gets multiplied by a factor $e$. 
\end{proof}

\begin{remark}\label{small schemes}
In the projective curve $C_G$, every point belongs to at most two irreducible components. If a connected projective curve $C\subset \PP^n$ has this property, and in addition it has Castelnuovo--Mumford regularity 2, then its dual graph must be a tree. (So if $G$ is not a tree, the ``regularity $\le$ 3'' result of Theorem \ref{thm:CMfromGraph}, (iii), is best possible.) To prove this, note that such a $C\subset \PP^n$ would be a small scheme in the sense of \cite{EGHP}. By \cite[Theorem 0.4]{EGHP}, there exists an ordering $C_1,\ldots ,C_s$ of the irreducible components of $C$ such that $(C_1\cup C_2 \ldots \cup C_i)\cap C_{i+1}$ 
is a single point for all $i=1,\ldots ,s-1$. But this is possible if and only if the dual graph is a tree. Note that if $G$ is a tree, the $e$ in the statement of Theorem \ref{thm:CMfromGraph} can be chosen to be 1.
\end{remark}

Sometimes it is possible to improve on the bounds for $d$ and $e$ given by  Theorem \ref{thm:CMfromGraph}. Here is one example where one can save $1$ both on $d$ and~$e$:

\begin{example} \label{ex:K4minusEdge}
Let $G_0$ be $K_4$ minus an edge, that is, the graph 
$G_0 = 13, \; 14, \; 23, \; 24, \; 34.$
In $\K[x,y,z]$ let us pick the four linear forms $\ell_1 = x$, $\; \ell_2= y$, $\; \ell_3 = z$, and $\ell_4= x + y + z$.
Since the missing edge in $G_0$ is $12$, we need to blow up $\mathbb{P}^2$ at the ideal $I=(\ell_1, \ell_2) = (x, y)$. We seek a $d$ such that $A[d]$ is a coordinate ring of $C_{G_0}$. Theorem \ref{thm:CMfromGraph} guarantees that any $d \ge 3$ would work. In fact, using directly \cite[Lemma 1.1]{CH}, we see that $d=2$ works already. We have
\[ A[2] =  \displaystyle \frac{\K[x^2,\, x y, \, x z, y^2, \,y z]}{
\left( \, x  y z (x + y + z) \right)}\cong \frac{\K[y_0,\ldots ,y_4]}{\left( \ y_0y_3-y_1^2, \ y_0y_4-y_1y_2, \ y_1y_4-y_2y_3, \ y_2(y_1+y_3+y_4) \ \right)}. \]
The Macaulay2 software \cite{macaulay2} allows to compute $\reg(A[2])=2$, so the proof of Theorem \ref{thm:CMfromGraph}, part (iii), guarantees that the second Veronese of $A[2]$ is Cohen--Macaulay. But in fact, $A[2]$ is already Cohen-Macaulay, so no Veronese is needed. 
En passant, note that 
 $A[2]^{(e)}$ is not Gorenstein for any $e$, because  the Veronese of any non-Gorenstein ring is not Gorenstein \cite[Theorem 3.2.1]{GW}.
\end{example}

\begin{remark}
The graph $G_0$ of Example \ref{ex:K4minusEdge} is also realizable as dual graph of an arrangement of projective lines. In fact, it is even a \emph{line (intersection) graph}, i.e.\ the dual graph of another graph. Yet in any projective line arrangement that has $G_0$ as dual graph, it is easy to see that either $r_1, r_3, r_4$ meet in a single point, or $r_2, r_3, r_4$ do. So a point of the line arrangement has multiplicity~$3$.
In contrast, Theorem \ref{thm:CMfromGraph} constructs always curve arrangements in which every singular point has multiplicity $2$.  
\end{remark}

We remind the reader that many graphs are neither line graphs, nor  dual to line arrangements. We have in fact the following hierarchy:

\begin{proposition}\label{prp:hierarchy} 
In any fixed dimension $d \ge 1$, we have \rm \small
\[
\left\{ \!\!
	\begin{array}{c}
	\textrm{line }\\
	\textrm{graphs} \\
	\end{array} \!\!
\right\}
\subset
\left\{ \!\!
	\begin{array}{c}
	\textrm{dual graphs }\\
	\textrm{of simplicial} \\
	d-\textrm{complexes}
	\end{array} \!\!
\right\}
\subsetneq
\left\{ \!\!
	\begin{array}{c}
	\textrm{dual graphs}\\
	\textrm{of projective} \\
	\textrm{line arr'ts}
	\end{array}\!\!
\right\}
\subsetneq
\left\{ \!\!
	\begin{array}{c}
	\textrm{dual graphs}\\
	\textrm{of affine} \\
	\textrm{line arr'ts}
	\end{array}\!\!
\right\}
\subsetneq
\left\{ \!\!
	\begin{array}{c}
	\textrm{dual graphs}\\
	\textrm{of projective}\\
		\textrm{ curves}
	\end{array}\!\!
\right\}
=
\left\{ \!\!
	\begin{array}{c}
	\textrm{all}\\
	\textrm{graphs}
	\end{array}\!\!
\right\}.
\]
\end{proposition}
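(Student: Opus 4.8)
The plan is to establish the chain of containments from left to right, proving each inclusion and each strictness/non-strictness claim separately, and finally the equality with ``all graphs'' at the right end.

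\textbf{The leftmost inclusion.} A line graph (the dual graph of a graph $H$, whose vertices are the edges of $H$ and whose adjacencies record shared endpoints) is realized by a simplicial complex: I would assign to each edge $e$ of $H$ a $d$-simplex and glue two such simplices along a common face of dimension $d-1$ precisely when the corresponding edges of $H$ share a vertex. In fixed dimension $d\ge 1$ one must check this can be done simplicially and embedded; the point is that incidence in $H$ is governed by single shared vertices, which translates into gluings along codimension-one faces, yielding a complex whose dual graph is exactly the line graph of $H$. I expect this inclusion to be routine but I would flag that the construction must respect the fixed dimension $d$.

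\textbf{The inclusions in the middle.} For ``dual graphs of simplicial $d$-complexes'' $\subseteq$ ``dual graphs of projective line arrangements,'' I would pass from a simplicial complex to a line arrangement by a standard realization (each top face becomes a line, codimension-one intersections become the shared points), citing or adapting the realizability results that underlie this correspondence. For ``projective line arrangements'' $\subseteq$ ``affine line arrangements,'' the containment is immediate by restricting to an affine chart that misses enough special points, or conversely by projectivizing; I would note that any projective arrangement's dual graph is realized affinely by deleting a hyperplane at infinity in general position, so the dual graph is preserved. The strictness of each of these middle inclusions should be witnessed by explicit small graphs: a graph realizable affinely but not projectively (because projective lines always pairwise meet, forcing extra edges), and a graph realizable as an affine line arrangement but not by any simplicial complex of the given dimension. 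I would exhibit concrete examples, e.g.\ using the fact that in $\PP^2$ any two lines meet, so the dual graph of a projective plane line arrangement on $s$ lines with no triple point is forced to be $K_s$, whereas affine arrangements allow parallel (non-meeting) lines and hence any graph that is a disjoint-free line-intersection pattern.

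\textbf{The rightmost equality.} The containment ``dual graphs of projective curves'' $\subseteq$ ``all graphs'' is trivial, so the content is the reverse: every graph is the dual graph of some projective curve. This is exactly what Theorem \ref{thm:CMfromGraph} delivers for connected graphs $G$, producing the arithmetically Cohen--Macaulay curve $C_G\subset\PP^n$ whose dual graph is $G$. For a disconnected graph I would apply the construction componentwise and place the resulting curves in disjoint linear subspaces of a large projective space, so that no spurious intersections are created and the dual graph is the disjoint union of the $G_i$. This reduces the equality entirely to the already-proven Theorem \ref{thm:CMfromGraph}, so no genuine new work is needed here.

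\textbf{Main obstacle.} The hard part will be verifying the strictness of the containment ``simplicial $d$-complexes'' $\subsetneq$ ``projective line arrangements'' and pinning down a graph that is dual to a projective (or affine) line arrangement but provably not dual to any simplicial $d$-complex in the fixed dimension $d$; this requires a genuine obstruction argument rather than a construction. By contrast, the left inclusion and the right equality are either direct constructions or immediate consequences of Theorem \ref{thm:CMfromGraph}, so the combinatorial realizability obstructions separating the various line-arrangement classes are where I would concentrate the effort, likely by citing the relevant matroid/oriented-matroid realizability literature and extracting small explicit separating examples.
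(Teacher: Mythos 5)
Your proposal has genuine gaps precisely where the proposition requires real work. First, you never address the strictness of the fourth inclusion (affine line arrangements $\subsetneq$ projective curves). Since you prove the rightmost class is \emph{all} graphs, strictness here amounts to exhibiting a graph that is not dual to any affine line arrangement in any $\mathbb{A}^n$, and nothing in your outline produces one. The paper does this with an explicit $7$-vertex graph $G_4$: if $r_1\cap r_2=P$, $r_3\cap r_4=Q$, and $r_3,r_4$ miss both $r_1,r_2$, then any further line meeting all of $r_1,r_2,r_3,r_4$ is either the line through $P$ and $Q$ or the intersection line of the plane spanned by $r_1,r_2$ with the plane spanned by $r_3,r_4$; so at most two such transversals exist, while $G_4$ demands three. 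Second, your argument for projective $\subsetneq$ affine is flawed: you invoke the fact that any two lines in $\PP^2$ meet, but the class in question allows arrangements in $\PP^n$ for every $n$, where projective lines can be disjoint, so the ``forced $K_s$'' observation rules out nothing. The paper instead takes $G_3=K_6$ minus two non-adjacent edges, realizes it affinely, and relies on a genuine obstruction (from the earlier paper of Benedetti--Varbaro) showing $G_3$ is not dual to any projective line arrangement in any ambient dimension.

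Third, for the strictness of simplicial $d$-complexes $\subsetneq$ projective line arrangements --- which you rightly flag as the main obstacle --- your plan to cite matroid or oriented-matroid realizability is misdirected: realizability theory concerns which incidence patterns come from actual lines, whereas what is needed here is a graph that \emph{is} dual to a line arrangement yet is not dual to any simplicial complex of any dimension. That obstruction is purely combinatorial; the paper proves it in its Appendix by classifying the pure $d$-complexes whose dual graph is $K_3$ (only the $d$-star and the $d$-windmill) and whose dual graph is the diamond, and then showing the $5$-vertex graph $G_1$ admits no completion by a fifth facet. Finally, even your positive inclusion simplicial $\subseteq$ projective line arrangements is not established: ``each top face becomes a line'' restates the goal rather than giving a construction. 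The paper's route is to pass to the Stanley--Reisner subspace arrangement of the complex, which has the same dual graph, and then cut by generic hyperplanes down to a line arrangement, again preserving the dual graph. Your treatment of the leftmost inclusion and of the right-hand equality (via Theorem \ref{thm:CMfromGraph}, with disconnected graphs handled in disjoint linear subspaces) is essentially sound, but the three strictness claims are where the content of the proposition lies, and your proposal either omits them or argues them incorrectly.
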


\begin{proof} The 
\textsc{first inclusion} is obvious, and well-known to be 
strict if $ d \ge 2$, as shown for example by the complete bipartite graph $K_{1,3}$. (In fact, $K_{1,d+1}$ is the dual graph of some complex $C$ if and only if $\dim C \ge d$.)

\textsc{Second inclusion}: As explained in \cite{BV}, given any simplicial complex, its Stanley--Reisner variety is a subspace arrangement with same dual graph. Via generic hyperplane sections, we can reduce ourselves in turn from the Stanley--Reisner variety to a line arrangement with same dual graph. This proves the inclusion. 
The second inclusion is not an equality: the graphs $G_1$ and $G_2$ of Figure 1 are easy counterexamples. 
(For reasons of clarity, we postpone the proof of this fact, which requires combinatorial topology but is otherwise elementary, to the Appendix.)


\begin{figure}[htb]
\centering
\includegraphics[scale=.75]{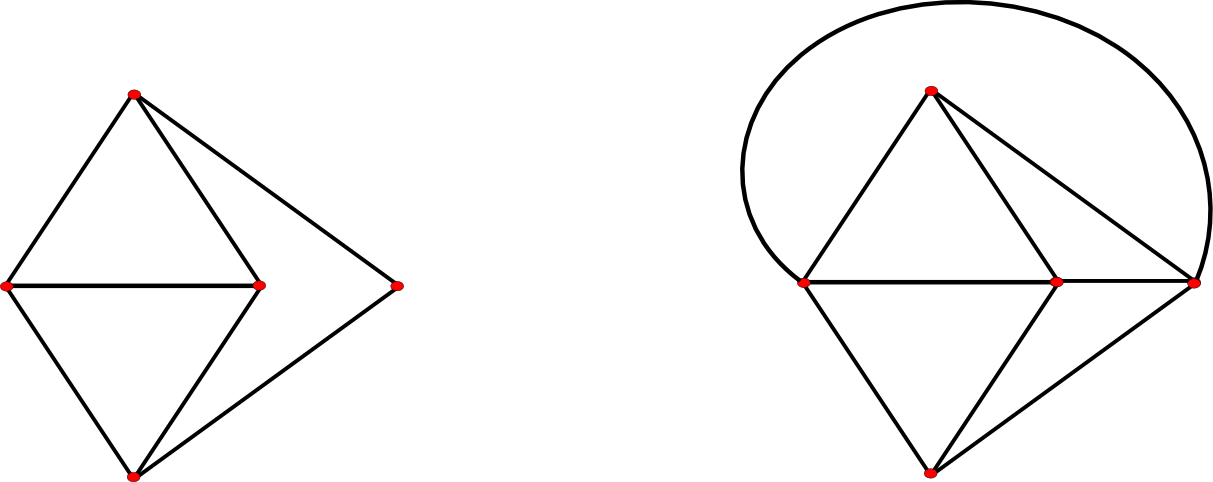} \small
\caption{Two graphs $G_1$, $G_2$ with $5$ vertices that are not dual to any simplicial complex of any dimension. 
}
\end{figure}

\textsc{Third inclusion}: Given a projective line arrangement $C\subset \PP^n$ we can always choose a hyperplane $H\subset \PP^n$ that avoids all the intersection points of $C$; if we set $U=\PP^n\setminus H$, then $C'=C\cap U  \subset U \cong \mathbb{A}^n$ is the desired affine line arrangement. As for the strictness: If $G_3$ is $K_6$ minus two non-adjacent edges, it is easy to produce a set of six affine lines with dual graph $G_3$; but in \cite[Rem.\ 4.1]{BV} we showed that $G_3$ cannot be dual to any projective line arrangement.

The \textsc{fifth inclusion} is well known to be an equality, see e.g.~Kollar \cite{Kollar}.

The \textsc{fourth inclusion} is therefore trivial; to prove its strictness, since the fifth inclusion is an equality, it suffices to find a graph that is not dual to any affine line arrangement. Consider the graph with the following edges
\[G_4 = \{12, 34\} \cup \{15, \, 25, \,35, \, 45 \}  \cup \{ 16, \, 26, \, 36, \, 46 \}  \cup \{ 17, \, 27, \, 37, \, 47 \}.\]  
By contradiction, let $\{r_1, \ldots, r_7 \}$ be an affine arrangement of lines with dual graph $G_4$. Set $P=r_1 \cap r_2$ and $Q=r_3 \cap r_4$. Let $\pi$ be the plane spanned by $r_1$ and $r_2$. Since $r_3$ intersects neither $r_1$ nor $r_2$, it cannot belong to the plane $\pi$. The same is true for $r_4$. Symmetrically, if $\mathfrak{q}$ is the plane spanned by $r_3$ and $r_4$, neither $r_1$ nor $r_2$ belongs to $\mathfrak{q}$. 
Now, how can a new line $r$ meet all four lines $r_1, r_2, r_3, r_4$? There are two options: 
\begin{compactitem}[ --]
\item either $r$ is the unique line passing through $P$ and $Q$, or 
\item $r$ is the (unique) line of intersection of the two planes $\pi$ and $\mathfrak{q}$. 
\end{compactitem}
But from the definition of $G_4$, we are supposed to find \emph{three} new lines ($r_5$, $r_6$ and $r_7$) each of which meets all four of $r_1, r_2, r_3, r_4$.  By the pidgeonhole principle, two of these three lines $r_5, r_6, r_7$ must coincide; a contradiction.
\end{proof}

\begin{remark}
Any graph containing $G_4$ as induced subgraph cannot be dual to any affine arrangement.
\end{remark}

\vskip3mm

\subsection*{Hochster--Huneke graphs and Lyubeznik complexes.} 
Let $A$ be a $d$-dimensional standard graded $\K$-algebra. Let $\{\pp_1,\ldots ,\pp_s\}$ be its minimal primes. We define the graph $G(A)$ as the graph whose vertices are $\{1,\ldots ,s\}$ and such that $\{i,j\}$ is an edge if and only if $A/(\pp_i+\pp_j)$ has dimension $d-1$. Obviously, $G(A)=G(\Proj(A))$. The graph $G(A) $ is sometimes called the {\it Hochster-Huneke graph} of $A$, after the work \cite{HH}. In this language, Theorem \ref{thm:CMfromGraph} implies that: 
\begin{corollary} Any 
connected graph is the Hochster--Huneke graph of a reduced 2-dimensional Cohen-Macaulay standard graded $\K$-algebra. 
\end{corollary}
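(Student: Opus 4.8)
The plan is to read this off directly from Theorem~\ref{thm:CMfromGraph}, which does all the real work. Given a connected graph $G$ on $s > 1$ vertices, I would fix any $d \ge \binom{s-1}{2}$ and any $e \ge \reg(A[d])$, and then simply set $A := A[d]^{(e)}$. The claim is that this $A$ has all four required properties, and each of them is either part of the statement of Theorem~\ref{thm:CMfromGraph}(iii) or an immediate consequence of the construction in Section~3.

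First I would collect the geometric properties. By part~(i), $\Proj(A[d]) = C_G$, and by construction $C_G$ is a reduced curve whose dual graph is $G$; hence $A[d]$ is reduced (being a homogeneous coordinate ring of the reduced scheme $C_G$) and two-dimensional (as noted inside the proof of part~(iii), $\dim A[d] = 2$). Passing to the $e$-th Veronese preserves both: $A[d]^{(e)}$ is again reduced, has the same Krull dimension $2$, and satisfies $\Proj(A[d]^{(e)}) = \Proj(A[d]) = C_G$. The Cohen--Macaulay property of $A = A[d]^{(e)}$ is exactly the conclusion of part~(iii), given our choice $e \ge \reg(A[d])$.

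It then remains to identify the Hochster--Huneke graph. Since $G(A) = G(\Proj(A))$ for any standard graded algebra, and $\Proj(A) = C_G$, we get $G(A) = G(C_G) = G$, the last equality being the defining feature of the construction of $C_G$. Thus $A$ realizes $G$.

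The one point that needs a word of care --- and the only place where I expect any friction --- is the phrase \emph{standard graded}: the generators $I_d$ of $R[d]$ sit in degree $d$ of $S$, so $A[d]$ is standard graded only after regrading so that $I_d$ occupies degree~$1$. I would make this regrading explicit (the old-degree of $R[d]$ is concentrated in multiples of $d$, so dividing degrees by $d$ makes $R[d] = \K[I_d]$, and hence its quotient $A[d]$, generated in degree~$1$), and then note that the $e$-th Veronese of a standard graded ring is, after the usual regrading placing $T_e$ in degree~$1$, again standard graded, since $T_e \cdot T_e = T_{2e}$ when $T$ is generated in degree~$1$. With this grading $A = A[d]^{(e)}$ is standard graded, which completes the verification. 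In short, the corollary is a genuine corollary: no new ideas beyond Theorem~\ref{thm:CMfromGraph} are needed, only the observation that the Hochster--Huneke graph of $A$ coincides with the dual graph of $\Proj(A)$.
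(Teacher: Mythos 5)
Your proposal is correct and takes essentially the same route as the paper, which offers no separate proof at all: the corollary is stated as an immediate consequence of Theorem~\ref{thm:CMfromGraph}(iii), exactly as you argue (your extra care about regrading to make $A[d]^{(e)}$ standard graded is a worthwhile detail the paper leaves implicit). One small repair: reducedness of $A[d]$ does not follow from reducedness of the scheme $\Proj(A[d])$ (that inference fails in general, e.g.\ for $\K[x,y]/(x^2,xy)$, whose Proj is a reduced point); instead use the paper's isomorphism $A[d]\cong B[d]\subset S/(\ell_1\ell_2\cdots \ell_s)$, a subring of a reduced ring, and note that reducedness then passes to the Veronese subring.
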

\noindent After completing at the irrelevant ideal, this yields an affirmative answer to the question raised in \cite{SS} of whether any graph is the Hochster-Huneke graph of a complete equidimensional local ring (indeed one can show that for a positively graded $\K$-algebra the dual graph does not change after completing at the irrelevant ideal, as it follows by the Equation (4) of \cite[Theorem 1.15]{Va}).

Theorem \ref{thm:CMfromGraph} is of interest also from the point of view of a recent result obtained in \cite{KLZ}: The {\it Lyubeznik complex} of $A$ is the simplicial complex $\Delta(A)$ on vertices $\{1,\ldots ,s\}$, where $\{i_0,\ldots ,i_k\}$ is a face if and only if $\dim(A/(\pp_{i_0}+\ldots +\pp_{i_k}))>0$. (The terminology is due to the paper \cite{Ly}, where the complex $\Delta(A)$ was introduced.) If
$\dim(A/(\pp_i+\pp_j+\pp_k))=0$ for all $i<j<k$ and $\dim(A/(\pp_i+\pp_j))>0 \iff \dim(A/(\pp_i+\pp_j))=d-1$ (e.g. when $\Proj(A)$ is a curve such that no three irreducible components meet at the same point), then $\Delta(A)=G(A)$. So, Theorem \ref{thm:CMfromGraph} implies 
that 
\begin{corollary}
Any 
connected graph is the Lyubeznik complex of a reduced 2-dimensional Cohen-Macaulay standard graded $\K$-algebra. 
\end{corollary}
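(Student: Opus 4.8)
The plan is to show that the hypotheses of this corollary let us identify the Lyubeznik complex $\Delta(A)$ with the Hochster--Huneke graph $G(A)$, and then invoke the previous corollary. First I would take the reduced $2$-dimensional Cohen--Macaulay standard graded $\K$-algebra $A$ produced by the previous corollary, for which $G(A)$ is the prescribed connected graph $G$. This $A$ is the homogeneous coordinate ring of the Cohen--Macaulay curve $C_G$ constructed in Theorem \ref{thm:CMfromGraph} (after passing to a suitable Veronese), so its $\Proj$ is a reduced projective curve in which every point lies on at most two irreducible components, by the explicit description in that theorem. The key observation is that this geometric feature is exactly what the hypothesis ``no three irreducible components meet at the same point'' encodes.

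Next I would verify the two numerical conditions that the excerpt itself isolates as sufficient for $\Delta(A)=G(A)$. Writing $\pp_1,\ldots,\pp_s$ for the minimal primes of $A$, I must check that $\dim(A/(\pp_i+\pp_j+\pp_k))=0$ for all $i<j<k$, and that $\dim(A/(\pp_i+\pp_j))>0 \iff \dim(A/(\pp_i+\pp_j))=d-1=1$. For the first condition, since $\dim A=2$, the scheme $\Proj(A)$ is a curve and the triple intersection $C_i\cap C_j\cap C_k$ corresponds to $A/(\pp_i+\pp_j+\pp_k)$; because no three of the components $C_i$ meet at a common point, this intersection is empty as a projective scheme, forcing $\dim(A/(\pp_i+\pp_j+\pp_k))=0$. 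For the second condition, any pairwise intersection $C_i\cap C_j$ of distinct irreducible curves is either empty (giving dimension $0$) or a finite set of points (again dimension $0$, i.e.\ $\dim(A/(\pp_i+\pp_j))=1$ as a graded ring since $\Proj$ of dimension $0$ corresponds to Krull dimension $1$), so the only positive-dimensional possibility is precisely $d-1=1$, exactly as required.

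With both conditions confirmed, the criterion stated just before the corollary yields $\Delta(A)=G(A)=G$. Since $A$ is already reduced, $2$-dimensional, Cohen--Macaulay, standard graded over $\K$, all the asserted properties hold simultaneously, and the corollary follows. I expect the main subtlety to be purely bookkeeping: matching the ring-theoretic dimension conventions (Krull dimension of $A/(\pp_i+\pp_j)$ being one more than the projective dimension of the corresponding subscheme) against the geometric statement that pairwise intersections are either empty or finite, and making explicit that the ``no three components meet'' property of $C_G$ from Theorem \ref{thm:CMfromGraph} is inherited by the Veronese embedding used in the previous corollary. No deep new idea is needed; the content is entirely in translating the already-established geometry of $C_G$ into the hypotheses of the $\Delta(A)=G(A)$ criterion.
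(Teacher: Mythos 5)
Your proposal is correct and takes essentially the same route as the paper: the paper also deduces the corollary by observing that in the curve $C_G$ of Theorem \ref{thm:CMfromGraph} every point lies on at most two irreducible components, so the criterion stated just before the corollary (triple intersections empty, pairwise intersections empty or of Krull dimension $d-1$) gives $\Delta(A)=G(A)$, and the previous corollary concludes. Your explicit verification of the two dimension conditions, and the remark that the Veronese does not change $\Proj$ and hence preserves the ``no three components meet'' property, merely spells out what the paper leaves implicit.
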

\noindent On the other hand, the results of \cite{KLZ} imply that, among graphs, only trees can be the Lyubeznik complex of a $d$-dimensional standard graded Cohen-Macaulay $\K$-algebra with $d\geq 3$ (if $\K$ is separably closed).

\section{From curves to graphs} 
In this section we prove the main result of the paper. Our first goal is to establish a bound on the regularity of a projective scheme as the sum of the regularities of its primary components. Unfortunately, this goal is hopeless in general, as there are counterexamples already among surfaces (see Example \ref{ex:aldo}). However, here we prove it for curves (cf. Lemma \ref{lem:EGrevisited} below), and later we will show that this suffices.

\begin{lemma}[{essentially Caviglia \cite{Cav}}]\label{lem:subadditivity}
Let $I, J$ be graded ideals of $S$.
If the Krull dimension of $\mathrm{Tor}^S_1(S/I,S/J)$ is at most 1, then $\reg(I\cap J)\leq \reg I + \reg J$.
\end{lemma}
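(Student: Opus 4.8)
The plan is to reduce the whole statement to a regularity bound for the sum $I+J$, using the Mayer--Vietoris short exact sequence. First I would write down
\[0 \to S/(I\cap J) \to S/I \oplus S/J \to S/(I+J) \to 0,\]
where the left map is the diagonal $\bar a \mapsto (\bar a,\bar a)$ and the right map is the difference $(\bar a,\bar b)\mapsto \bar a-\bar b$; exactness is a routine check (a pair maps to $0$ in $S/(I+J)$ exactly when it lies in the diagonal). Since $S$ is a polynomial ring, passing between an ideal and its quotient shifts regularity by one, so $\reg(I\cap J)=\reg(S/(I\cap J))+1$, and likewise $\reg I=\reg(S/I)+1$ and $\reg J=\reg(S/J)+1$. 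Hence it suffices to control $\reg(S/(I\cap J))$.

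Second, I would feed the sequence into the long exact sequence in local cohomology. For any short exact sequence $0\to A\to B\to C\to 0$, the maps $H^{i-1}_{\mm}(C)_j\to H^i_{\mm}(A)_j\to H^i_{\mm}(B)_j$ force $\reg A\le \max\{\reg B,\ \reg C+1\}$. Applying this with $A=S/(I\cap J)$, $B=S/I\oplus S/J$, and $C=S/(I+J)$, and using $\reg B=\max\{\reg(S/I),\reg(S/J)\}$, I get
\[\reg(S/(I\cap J)) \le \max\{\reg(S/I),\ \reg(S/J),\ \reg(S/(I+J)) + 1\}.\]
Adding $1$ throughout turns this into $\reg(I\cap J)\le \max\{\reg I,\ \reg J,\ \reg(S/(I+J))+2\}$. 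The first two terms are harmless, since $\reg I,\reg J\le \reg I+\reg J$ for proper ideals (regularities being nonnegative), so the entire lemma reduces to the inequality $\reg(S/(I+J))\le \reg(S/I)+\reg(S/J)$, because $\reg(S/I)+\reg(S/J)+2=\reg I+\reg J$.

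Third, and this is the heart of the matter, I would identify $S/(I+J)=S/I\otimes_S S/J$ and invoke Caviglia's bound on the regularity of a tensor product: if $\dim \mathrm{Tor}^S_1(M,N)\le 1$, then $\reg(M\otimes_S N)\le \reg M+\reg N$. Taking $M=S/I$ and $N=S/J$, the hypothesis $\dim \mathrm{Tor}^S_1(S/I,S/J)\le 1$ is exactly what is needed, and the desired inequality follows, closing the argument. The main obstacle is precisely this tensor-product bound. In the Tor-independent case ($\mathrm{Tor}^S_i(M,N)=0$ for all $i\ge 1$) it is easy: tensoring the two minimal free resolutions yields a resolution of $M\otimes_S N$ whose twists simply add, so the regularities add. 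The real content of \cite{Cav} is that one need only control $\mathrm{Tor}_1$, and only up to dimension $1$; the proof proceeds by cutting with a generic hyperplane and inducting on dimension, the hypothesis $\dim \mathrm{Tor}^S_1\le 1$ being exactly what makes the relevant $\mathrm{Tor}$ finite length after one generic section, so that the base case applies. I would either cite this directly or reproduce the hyperplane-section induction, taking care that generic sections interact correctly with both regularity and $\mathrm{Tor}$.
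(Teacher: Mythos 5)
Your proposal is correct and follows essentially the same route as the paper's proof: the same short exact sequence $0 \to S/(I\cap J)\to S/I\oplus S/J\to S/(I+J)\to 0$, the same regularity estimate $\reg S/(I\cap J)\le \max\{\reg S/I,\ \reg S/J,\ 1+\reg S/(I+J)\}$ coming from the long exact sequence in local cohomology, and the same appeal to Caviglia's bound (his Corollary 3.4) applied to $S/I\otimes_S S/J = S/(I+J)$, which is exactly where the hypothesis $\dim \mathrm{Tor}^S_1(S/I,S/J)\le 1$ enters.
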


\begin{proof}
By a result of Caviglia \cite[Corollary 3.4]{Cav} we have \[\reg S/(I+J) \le \reg S/I \, + \, \reg S/J .\] 
From the short exact sequence
$0\rightarrow S/(I\cap J)\rightarrow S/I\oplus S/J \rightarrow S/(I+J)\rightarrow 0,$
we immediately obtain
\[\reg \, S/(I\cap J) \, \le \max\{ \reg S/I, \reg S/J, 1 + \reg S/(I+J) \}\le \reg S/I +\reg S/J + 1.\] 
This is equivalent to the claim, because $\reg S/H =\reg H -1$ for any graded ideal $H$.
\end{proof}

\begin{lemma} \label{lem:EGrevisited}
Let $C\subset \PP^n$ be a curve and $C=\cup_{i=1}^sC_i$ a primary decomposition of $C$, then:
\begin{compactenum}[\rm (1)]
\item $\reg C\leq \reg C_1 + \ldots + \reg C_s$.
\item If in addition $C$ is reduced and $\dim C_i=\dim C_j$ then
$\reg C\leq \deg C$.
\end{compactenum}
\end{lemma}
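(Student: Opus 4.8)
The plan is to derive part (1) by induction on the number $s$ of primary components, applying the subadditivity Lemma \ref{lem:subadditivity} at each step, and then to obtain part (2) by feeding a degree bound for the individual integral components into part (1).

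For part (1), the base case $s=1$ is vacuous. For the inductive step I would split the saturated ideal as $I_C = I_{C_1}\cap I'$, where $I' \defeq I_{C_2}\cap\ldots\cap I_{C_s}$ is the saturated ideal of the curve $C' \defeq C_2\cup\ldots\cup C_s$. To apply Lemma \ref{lem:subadditivity} with $I=I_{C_1}$ and $J=I'$, I must check that $\mathrm{Tor}^S_1(S/I_{C_1},S/I')$ has Krull dimension at most $1$. This is where the curve hypothesis is essential: the support of $\mathrm{Tor}^S_i(S/I_{C_1},S/I')$ is contained in $V(I_{C_1})\cap V(I')$, and since $I_C\subseteq I_{C_1}$ and $I_C\subseteq I'$ we have $V(I_{C_1})\cap V(I')\subseteq V(I_C)$, a set of dimension $\dim C\le 1$. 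Hence the hypothesis holds automatically, and the lemma yields $\reg I_C\le \reg I_{C_1}+\reg I'$. Since $I'$ is saturated (being an intersection of saturated ideals) and cuts out the curve $C'$ with primary decomposition $C_2\cup\ldots\cup C_s$, the inductive hypothesis gives $\reg I'\le \sum_{i=2}^s\reg C_i$; combining this with $\reg C_1=\reg I_{C_1}$ closes the induction.

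For part (2), since $C$ is reduced and equidimensional of dimension $1$, each primary component $C_i$ is an integral curve and the degree is additive, $\deg C=\sum_{i=1}^s\deg C_i$. By part (1) it then suffices to prove $\reg C_i\le \deg C_i$ for each $i$. Here I would replace $\PP^n$ by the linear span $\PP^{n_i}$ of $C_i$ --- a re-embedding that leaves the regularity unchanged --- so that $C_i$ becomes a non-degenerate integral curve in $\PP^{n_i}$. For $n_i\ge 2$ the Gruson--Lazarsfeld--Peskine bound $\reg C_i\le \deg C_i-\mathrm{codim}(C_i)+1=\deg C_i-n_i+2\le \deg C_i$ applies, while for $n_i=1$ the component is a line with $\reg C_i=1=\deg C_i$. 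Summing over $i$ gives $\reg C\le \sum_i\reg C_i\le \sum_i\deg C_i=\deg C$.

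The main obstacle is conceptual rather than computational: verifying the Tor-dimension hypothesis of Lemma \ref{lem:subadditivity} at every inductive step. The bound holds precisely because all schemes involved sit inside the one-dimensional $C$, so that every intermediate intersection locus has dimension $\le 1$; this is exactly the feature that fails in higher dimension (cf.\ Example \ref{ex:aldo}), which is why the subadditivity of regularity cannot be expected beyond curves. A secondary point to handle with care is the invariance of regularity under re-embedding a component into its linear span, which is what makes the Gruson--Lazarsfeld--Peskine estimate directly applicable.
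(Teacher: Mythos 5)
Your overall architecture --- induction on $s$ via Lemma \ref{lem:subadditivity}, then GLP plus additivity of degree for part (2) --- is exactly the paper's, and your part (2) is correct. However, there is a genuine gap at the one step you yourself single out as the crux: the verification that $\mathrm{Tor}^S_1(S/I_{C_1},S/I')$ has Krull dimension at most $1$. The Krull dimension in Lemma \ref{lem:subadditivity} is the dimension of the Tor module as an $S$-module, i.e.\ the dimension of its support inside $\mathrm{Spec}(S)$ --- the affine cone. Your containment $\mathrm{Supp}\bigl(\mathrm{Tor}^S_1(S/I_{C_1},S/I')\bigr)\subseteq V(I_C)$ is true, but $V(I_C)\subset \mathrm{Spec}(S)$ is the cone over the projective curve $C$ and has Krull dimension $\dim C+1=2$, not $\dim C\le 1$. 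So the hypothesis of Lemma \ref{lem:subadditivity} does not follow from what you proved: you have conflated the dimension of the projective scheme with the Krull dimension of modules supported on its cone.

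The correct verification --- which is how the paper argues --- uses the sharper containment into the intersection, not the union: $I_{C_1}+I'$ annihilates $\mathrm{Tor}^S_1(S/I_{C_1},S/I')$, because the annihilator of either factor does, so the Tor module is a module over $S/(I_{C_1}+I')$ and it suffices to bound $\dim S/(I_{C_1}+I')$. Now $V(I_{C_1}+I')=V(I_{C_1})\cap V(I')$ is the cone over $C_1\cap(C_2\cup\cdots\cup C_s)$, and this projective scheme is zero-dimensional: if $\pp_i$ denotes the radical of $I_{C_i}$, any one-dimensional irreducible closed subset of $V(\pp_1)\cap V(\pp_j)$ would have to coincide with both $V(\pp_1)$ and $V(\pp_j)$, contradicting the distinctness of the associated primes in the primary decomposition. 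Hence $\dim S/(I_{C_1}+I')\le 1$ and the lemma applies. This also corrects your closing heuristic: the reason the argument breaks for surfaces (Example \ref{ex:aldo}) is not that ``everything sits inside'' a higher-dimensional ambient scheme, but that two distinct surface components can meet along a curve, whose cone is two-dimensional, so the Tor-dimension hypothesis genuinely fails --- whereas distinct primary components of a curve always meet in finitely many points.
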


\begin{proof}
For any $t \in \{2,\ldots ,s\}$, set
$J_t \defeq I_{C_t} + \bigcap_{i=1}^{t-1}I_{C_i}$. 
The ideal $J_t$ defines a $0$-dimensional projective scheme,
so $\dim S/J_t \le 1$. Moreover, $J_t$ annihilates $\mathrm{Tor}^S_1(S/\cap_{i=1}^{t-1}I_{C_i},\,S/I_{C_t})$. 
So the Krull dimension of
$\mathrm{Tor}^S_1(S/\cap_{i=1}^{t-1}I_{C_i}, \, S/I_{C_t})$ is at most 1. By Lemma \ref{lem:subadditivity} and by induction on $t$, we obtain 
$\reg I_C \leq \sum_{i=1}^s \reg I_{C_i}$,
which proves part (1) of the claim.

As for part (2), note that $\deg C=\sum_{i=1}^s \deg C_i$. Since the Eisenbud-Goto conjecture holds for integral curves \cite{GLP}, we have
\[\reg C_i \le  \deg C_i -\dim(\mathrm{Span} \ C_i) \, +2.\]
Therefore $\reg C_i \leq \deg C_i$ if $\dim (\mathrm{Span} \ C_i) \ge 2$. On the other hand, if the dimension of $\mathrm{Span} \ C_i$ is 1, then $C_i$ is a line: so $\reg C_i=1=\deg C_i$. By part (1) of the claim, which we have already proved, we conclude that
\[\reg C\leq \sum_{i=1}^s \reg C_i \leq \sum_{i=1}^s \deg C_i=\deg C. \qedhere\]
\end{proof}

\begin{example}\label{ex:aldo}
The above lemma cannot be extended to dimension $>1$. The following example is due to Aldo Conca: Let $H\subseteq \PP^4$ be the plane $x_1=x_2=0$. Let $\pp$ be the kernel of the map from $\K[x_0,\ldots ,x_4]$ to $\K[a,b,c]$ given by:
\[
x_0 \mapsto a^3b,  \qquad x_1 \mapsto b^4, \qquad  x_2 \mapsto a^3c, \qquad
x_3  \mapsto abc^2,  \qquad x_4 \mapsto b^2c^2.
\] 
This $\pp$ is a prime ideal of height 2 and regularity 5. Let  $X\subseteq \PP^4$ be the surface it defines. One has $\reg X=5$, $\reg H=1$, but $\reg (X\cup H)=7$. One can see (via Lemma \ref{lem:EGrevisited}) that any general hyperplane section of $X\cup H$ has regularity smaller than $7$. This is because $X\cup H$ is not arithmetically Cohen-Macaulay. 
\end{example}

\begin{theorem} \label{thm:1} Let $r, r'$ be positive integers. Let $X\subset \PP ^n$ be an arithmetically Gorenstein scheme of regularity $r+1$ and let $X=X_1\cup X_2 \cup \ldots \cup X_s$ be the primary decomposition of $X$. If $\reg X_i\le r'$ for all $i=1,\ldots ,s$, then the dual graph $G(X)$ is  $\lfloor{\frac{r+r'-1}{r'}}\rfloor$-connected.
\end{theorem}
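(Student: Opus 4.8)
The plan is to reduce the statement to the case of curves and then run a connectivity argument powered by the arithmetically Gorenstein hypothesis together with the subadditivity Lemma~\ref{lem:EGrevisited}.

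First I would pass from $X$ to a curve. Since $S/I_X$ is Cohen--Macaulay it is unmixed, so every primary component $X_i$ has the same dimension $d=\dim X$. Cutting with a general linear subspace $\Lambda$ of codimension $d-1$, I claim $C:=X\cap\Lambda$ is an arithmetically Gorenstein curve with $\reg C=\reg X=r+1$, with $\reg(X_i\cap\Lambda)\le\reg X_i\le r'$ for every $i$, and with $G(C)=G(X)$. Indeed, a general linear form is a nonzerodivisor on the Cohen--Macaulay ring $S/I_X$, so iterated cutting preserves both the arithmetically Gorenstein property and the regularity; Bertini keeps the components distinct and primary; and an intersection $X_i\cap X_j$ of dimension $\dim X-1$ meets $\Lambda$ in dimension $0=\dim C-1$, while lower-dimensional intersections stay below, so the edge set is unchanged. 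After this reduction all $C_i$ are curves with $\reg C_i\le r'$.

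Arguing by contradiction, suppose $G(C)$ is disconnected by a vertex set $W$ of size $w\le k-1$, where $k=\lfloor\frac{r+r'-1}{r'}\rfloor$, with $V\setminus W=A\sqcup B$ and no edge between $A$ and $B$. For a curve an edge means sharing a point, so $C_A\cap C_B=\emptyset$, where $C_A=\bigcup_{i\in A}C_i$ and similarly for $B,W$. Writing $I_C=I_{C_A}\cap I_{C_{W\cup B}}$, the two factors have no common component, so $C_A$ and $C_{W\cup B}$ are geometrically linked by the arithmetically Gorenstein curve $C$. I would feed this into the Mayer--Vietoris sequence for local cohomology attached to $0\to S/I_C\to S/I_{C_A}\oplus S/I_{C_{W\cup B}}\to S/(I_{C_A}+I_{C_{W\cup B}})\to 0$. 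The Gorenstein hypothesis gives $H^0_\mm(S/I_C)=H^1_\mm(S/I_C)=0$ and pins the top nonvanishing degree of $H^2_\mm(S/I_C)$ at the $a$-invariant $r-2$; since $C_A\cap C_{W\cup B}=C_A\cap C_W$ is $0$-dimensional, this transfers all the information about the separation to the finitely many points where $A$ meets the bridge $W$, in a range of degrees governed by $r-2$. The subadditivity Lemma~\ref{lem:EGrevisited} then bounds the regularity of any subcurve by the sum of the regularities of its components, so the bridge contributes at most $r'$ per vertex of $W$; tracking a shortest $A$--$B$ chain through $W$ against the degree window forced by Gorenstein duality should yield $r\le(w-1)r'+1$, i.e.\ $w\ge\frac{r+r'-1}{r'}$, hence $w\ge k$, contradicting $w\le k-1$.

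The main obstacle is precisely this last quantitative step: extracting from the liaison/local-cohomology picture the sharp statement that a separating set of size $w$ caps the regularity at $(w-1)r'+2$. The qualitative connectedness ($w\ge 1$) is just Hartshorne's theorem; the real work is to show that each additional unit of connectivity costs a full $r'$ of regularity, which is what produces both the factor $r'$ and the floor in $\lfloor\frac{r+r'-1}{r'}\rfloor$. I expect to need the Gorenstein self-duality (the symmetry of the minimal free resolution of $S/I_C$) to control the degrees on both ``ends'' of $C$ simultaneously, so that the estimate cannot be spoiled by an asymmetric configuration concentrating the regularity on one side of the bottleneck.
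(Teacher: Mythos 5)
Your reduction to curves is correct and matches the paper's (general hyperplane sections preserve the arithmetically Gorenstein property, the regularity, and the dual graph, while the regularities of the components can only drop). The genuine gap is in the connectivity argument, and it is located exactly at the step you flag as "the main obstacle" --- but that step is not an obstacle; it only looks hard because you linked the wrong pair of subcurves. You link $C_A$ against $C_{W\cup B}$, i.e.\ one side of the separation against the bridge \emph{plus the other side}. Schenzel's theorem then relates $H^1_{\mm}(S/I_{C_A})$ to $H^1_{\mm}(S/I_{C_{W\cup B}})^{\vee}(2-r)$, but $C_{W\cup B}$ contains all the components indexed by $B$, so its regularity is in no way bounded by $w\cdot r'$, and the duality yields no vanishing in the relevant degree. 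No amount of "tracking a shortest $A$--$B$ chain through $W$" recovers the estimate from this decomposition; the quantitative statement you defer to the end is, with your setup, unprovable by these means and is in fact the whole theorem.

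The missing idea is to link the \emph{whole removed curve} $C_W$ against the \emph{whole remaining curve} $C_{A\cup B}$. These share no component, so they are geometrically linked by the arithmetically Gorenstein curve $C$ (both are locally Cohen--Macaulay, being unmixed curves), and Schenzel's theorem gives the graded isomorphism
\[
H^1_{\mm}\bigl(S/I_{C_{A\cup B}}\bigr)\;\cong\;H^1_{\mm}\bigl(S/I_{C_W}\bigr)^{\vee}(2-r).
\]
Now the subadditivity Lemma \ref{lem:EGrevisited} is applied to the \emph{small} curve $C_W$ alone:
\[
\reg C_W \;\le\; w\, r' \;\le\; \left(\left\lfloor \tfrac{r+r'-1}{r'}\right\rfloor-1\right) r' \;\le\; r-1,
\]
hence $\reg S/I_{C_W}\le r-2$, hence $H^1_{\mm}(S/I_{C_W})_{r-2}=0$, hence by the displayed duality $H^1_{\mm}(S/I_{C_{A\cup B}})_0=0$, i.e.\ $C_{A\cup B}$ is a connected curve --- contradicting your assumed separation into $A\sqcup B$ (a disconnected projective curve over an algebraically closed field has $\dim_{\K}H^0(\OOO)\ge 2$, forcing $H^1_{\mm}(S/I)_0\ne 0$). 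So "each deleted vertex costs at most $r'$ of regularity" refers to the regularity of the \emph{deleted} curve, not of a bottleneck inside the remaining one; this single application of subadditivity, fed into the one degree $r-2$ appearing in Schenzel's isomorphism, is the entire quantitative content. No symmetry of the minimal free resolution and no degree-window bookkeeping are needed. This is exactly the paper's proof, stated there without contradiction: for every $B$ with $|B|<\lfloor\frac{r+r'-1}{r'}\rfloor$, the curve on the complementary set of vertices is connected.
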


\begin{proof}
First we show that there is no loss in assuming  that $X$ is a curve. In fact, if $\dim X \ge 2$, as explained in \cite[Lemma 2.12]{BV} we can always take a general hyperplane section of $X$, thereby obtaining a scheme $X'\subset \PP^{n-1}$ of dimension one less, such that $G(X)=G(X')$. Since $X$ is arithmetically-Gorenstein and the hyperplane section is general, in passing from $X$ to $X'$ both the arithmetically-Gorenstein property and the (global) Castelnuovo\---Mumford regularity are maintained. Caveat: the regularities of the components need not be maintained; but since the regularity of a general hyperplane section of any projective scheme cannot be larger than the regularity of the original scheme \cite[Lemma 4.8 + Corollary 4.10]{Eisenbud}, the regularities of the components of $X'$ can only be smaller or equal than those of $X$, so they will still be bounded above by $r'$. Iterating this process $\dim X -1$ times, we can reduce ourselves to the $1$-dimensional case.

By the assumption, $S/I_X$ is a Gorenstein ring of regularity $r$ and $\reg I_{X_i} \leq r'$ for all $i=1,\ldots ,s$. 
Let $B$ be a subset of $\{1,\ldots ,s\}$ of cardinality  $|B| < \lfloor{\frac{r+r'-1}{r'}}\rfloor$. Let $A=\{1,\ldots ,s\}\setminus B$. Define 
\[\begin{array}{lll}
X_A \defeq \bigcup_{i\in A}X_i , \\
X_B \defeq \bigcup_{i\in B}X_i. 
\end{array}\]

Our goal is to show that the dual graph of $X_A$ is connected, or in other words, that $X_A$ is a connected curve. 
(This would imply the claim, because the dual graph of $X_A$ is exactly the dual graph of $X$ with the vertices in $B$ removed, and $B$ was an arbitrary  subset of $\{1,\ldots ,s\}$ of cardinality less than $\lfloor{\frac{r+r'-1}{r'}}\rfloor$.)

The curves $X_A$ and $X_B$ are geometrically linked by $X$, which is arithmetically Gorenstein. Exploiting that $\dim X_i=1$ for all $i=1,\ldots ,s$, both $X_A$ and $X_B$ are locally Cohen-Macaulay curves. By Schenzel's work~\cite{Sc} (see also Migliore \cite[Theorem 5.3.1]{Migliore}) this implies the existence of a graded isomorphism
\[ H^1_{\mathfrak{m}}(S/ I_{X_A}) \cong H^1_{\mathfrak{m}}(S/ I_{X_B})^{\vee }(2-r) . \]
In particular the two finite $\K$-vector spaces $H^1_{\mathfrak{m}}(S/ I_{X_A})_0$ and $H^1_{\mathfrak{m}}(S/ I_{X_B})_{r-2}$ are dual to one another.
Notice that the connectedness of $C_A$ (which is what we want to show) follows by the vanishing of $H^1_{\mathfrak{m}}(S/ I_A)_0$, and thus of $H^1_{\mathfrak{m}}(S/ I_B)_{r-2}$. 
So it is enough to show that \[\reg S/I_{X_B} \le r-2.\]  
But $\reg S/I_{X_B} = \reg X_B-1$, and by our Lemma \ref{lem:EGrevisited} we have precisely
\[\reg X_B \le |B| \cdot r' \leq \left(\left\lfloor \frac{r+r'-1}{r'}\right\rfloor  - 1\right) r'   \le  \left(\frac{r+r'-1}{r'} - 1\right) r' = r-1. \qedhere\]
\end{proof}

One might wonder if the previous statement still holds by replacing the ``bounded regularity'' assumption for the primary components, with a ``bounded degree'' assumption. The answer is positive, but an additional assumption is needed, namely, $X$ should be reduced.

\begin{corollary} \label{cor:1degreeversion} Let $D$ and $r$ be positive integers. Let $X\subset \PP ^n$ be a reduced arithmetically Gorenstein scheme of regularity $r+1$. If every irreducible component of $X$ has degree $\le D$, then the dual graph $G(X)$ is  $\lfloor{\frac{r+D-1}{D}}\rfloor$-connected.
\end{corollary}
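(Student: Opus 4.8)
The plan is to follow the proof of Theorem \ref{thm:1} almost verbatim, replacing the appeal to part (1) of Lemma \ref{lem:EGrevisited} with an appeal to part (2). As there, I would first reduce to the case where $X$ is a curve. Since $X$ is arithmetically Gorenstein, $S/I_X$ is Cohen--Macaulay and hence equidimensional without embedded primes, so all irreducible components of $X$ have dimension $\dim X$; in particular, when $\dim X \ge 2$, each component has dimension $\ge 2$. Taking a general hyperplane section as in \cite[Lemma 2.12]{BV} preserves the arithmetically Gorenstein property, the global regularity $r+1$, and the dual graph $G(X)$. I would additionally need to check that it preserves both reducedness and the bound $\deg \le D$ on the components: by Bertini, a general hyperplane section of an irreducible variety of dimension $\ge 2$ is again irreducible and reduced, and a general hyperplane section preserves the degree of each component. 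Iterating $\dim X-1$ times reduces to a reduced arithmetically Gorenstein curve all of whose (now one-dimensional) irreducible components have degree $\le D$.

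In the curve case I would set up exactly as in Theorem \ref{thm:1}: write $S/I_X$ as a Gorenstein ring of regularity $r$, fix a subset $B\subset\{1,\ldots,s\}$ with $|B|<\lfloor\frac{r+D-1}{D}\rfloor$, put $A=\{1,\ldots,s\}\setminus B$, and form $X_A=\bigcup_{i\in A}X_i$ and $X_B=\bigcup_{i\in B}X_i$. Since $X$ is reduced, the $I_{X_i}$ are prime and $I_{X_B}=\bigcap_{i\in B}I_{X_i}$ is radical, so $X_B$ is a reduced curve whose components are all one-dimensional. The curves $X_A$ and $X_B$ are geometrically linked by the arithmetically Gorenstein scheme $X$, and both are locally Cohen--Macaulay, so Schenzel's linkage duality $H^1_{\mm}(S/I_{X_A})\cong H^1_{\mm}(S/I_{X_B})^{\vee}(2-r)$ from \cite{Sc} applies verbatim. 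As in the proof of Theorem \ref{thm:1}, the connectedness of $X_A$---hence the $\lfloor\frac{r+D-1}{D}\rfloor$-connectedness of $G(X)$---follows from the vanishing of $H^1_{\mm}(S/I_{X_A})_0$, equivalently of $H^1_{\mm}(S/I_{X_B})_{r-2}$, so it suffices to prove $\reg S/I_{X_B}\le r-2$.

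This is the only place where the argument genuinely differs. Instead of bounding $\reg X_B$ by $|B|\cdot r'$ via part (1) of Lemma \ref{lem:EGrevisited}, I would invoke part (2): since $X_B$ is reduced and equidimensional (all components being curves), that lemma gives $\reg X_B\le\deg X_B=\sum_{i\in B}\deg X_i\le|B|\cdot D$. Then
\[
\reg X_B \;\le\; |B|\cdot D \;\le\; \left(\left\lfloor\tfrac{r+D-1}{D}\right\rfloor-1\right)D \;\le\; \left(\tfrac{r+D-1}{D}-1\right)D \;=\; r-1,
\]
so $\reg S/I_{X_B}=\reg X_B-1\le r-2$, as required.

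The main obstacle, as in Theorem \ref{thm:1}, is not the linkage step---which is imported wholesale---but checking that the reduction to curves keeps all hypotheses intact. Specifically, I expect the delicate point to be verifying that a general hyperplane section preserves reducedness and does not increase the degrees of the irreducible components, so that the bound $\deg\le D$ survives each of the $\dim X-1$ steps; it is precisely the reduced equidimensional curve $X_B$ that feeds into part (2) of Lemma \ref{lem:EGrevisited}. Over the infinite field assumed throughout, Bertini-type theorems supply both facts, but this is exactly where the reducedness hypothesis on $X$---absent in Theorem \ref{thm:1}---is indispensable.
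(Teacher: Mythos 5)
Your proposal is correct and takes essentially the same approach as the paper: reduce to a curve via general hyperplane sections (which preserve the arithmetically Gorenstein property, the regularity, the dual graph, reducedness, and the component degrees), then bound $\reg X_B \le |B|\cdot D \le r-1$ so that the Schenzel linkage argument goes through. The only difference is organizational: the paper uses \cite{GLP} to convert the degree bound into the regularity bound $r'=D$ on each irreducible component of the sectioned curve and then invokes Theorem \ref{thm:1} as a black box, whereas you inline the proof of Theorem \ref{thm:1} and appeal to part (2) of Lemma \ref{lem:EGrevisited} instead of part (1) --- but part (2) is itself proved by exactly that combination of \cite{GLP} with part (1), so the two arguments have identical mathematical content.
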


\begin{proof}
If $X$ has dimension $N$, let us take $N-1$ general hyperplane sections and call $X'$ the resulting curve. As in the proof of Theorem \ref{thm:1}, $X'$ is arithmetically Gorenstein and $G(X)=G(X')$. Since general hyperplane sections maintain the degree, each irreducible component of $X'$ has degree $\le D$. Moreover, $X'$ is reduced, since $X$ is. By \cite{GLP}, we infer that each irreducible component of $X'$ has also regularity $\le D$. Applying Theorem \ref{thm:1}, we conclude.
\end{proof}

\begin{corollary}\label{cor:ci}
Let $X_1,\ldots ,X_s\subseteq \PP^n$ be integral projective curves of degrees $d_1\leq d_2\leq \ldots \leq d_s$. If $X=\bigcup_{i=1}^sX_i\subseteq \PP^n$ is a complete intersection, then each $X_i$ must meet at least $\lfloor \frac{N+d_s-1}{d_s}\rfloor$ of the other $X_j$'s, where
\[N=\min\left\{\sum_{k=1}^{n-1}\delta_k-n+1 \ : \ \delta_i\in\mathbb{N} \mbox{ \ and \ }\prod_{k=1}^{n-1}\delta_k=\sum_{i=1}^sd_i\right\}\geq (n-1)\cdot \left( \left(\sum_{i=1}^sd_i \right)^{1/(n-1)}-1\right).\]
\end{corollary}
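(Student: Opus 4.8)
The plan is to read the statement as a direct consequence of Corollary~\ref{cor:1degreeversion}, the only real content being the bookkeeping that turns the numerical data of a complete intersection into the hypotheses of that corollary, plus the elementary graph-theoretic fact that a $k$-connected graph has minimum degree at least $k$. First I would record what the hypothesis means geometrically. Since $\dim X=1$, the curve $X=\bigcup_{i=1}^s X_i$ has codimension $n-1$ in $\PP^n$, so as a complete intersection it is cut out by $n-1$ homogeneous forms, say of degrees $\delta_1,\ldots,\delta_{n-1}$. By Bézout the degree of $X$ is $\prod_{k=1}^{n-1}\delta_k$, and because $X$ is reduced with integral components $X_i$ of degree $d_i$, this gives $\prod_{k=1}^{n-1}\delta_k=\sum_{i=1}^s d_i$. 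As recalled in the introduction, a complete intersection is arithmetically Gorenstein of regularity $\sum_{k=1}^{n-1}\delta_k-(n-1)+1$; writing this as $r+1$ in the notation of Theorem~\ref{thm:1} yields $r=\sum_{k=1}^{n-1}\delta_k-n+1$. In particular the actual factorization $\prod\delta_k=\sum d_i$ is one of those competing in the definition of $N$, so $N\le r$.

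Next I would apply Corollary~\ref{cor:1degreeversion} with $D=d_s$: as $X$ is a reduced arithmetically Gorenstein scheme of regularity $r+1$ whose irreducible components all have degree $\le d_s$, the dual graph $G(X)$ is $\big\lfloor\frac{r+d_s-1}{d_s}\big\rfloor$-connected. I would then translate this connectivity into a statement about vertex degrees. Setting $k:=\big\lfloor\frac{r+d_s-1}{d_s}\big\rfloor$, if some vertex $i$ had fewer than $k$ neighbours, deleting those fewer-than-$k$ vertices would isolate $i$ and thus disconnect $G(X)$, contradicting $k$-connectivity; hence every vertex of $G(X)$ has degree at least $k$. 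By the definition of the dual graph this says exactly that each $X_i$ meets at least $k$ of the remaining curves. Finally, since $N\le r$ and $t\mapsto\big\lfloor\frac{t+d_s-1}{d_s}\big\rfloor$ is nondecreasing, we obtain $k=\big\lfloor\frac{r+d_s-1}{d_s}\big\rfloor\ge\big\lfloor\frac{N+d_s-1}{d_s}\big\rfloor$, which is the asserted bound.

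It then remains only to justify the displayed estimate for $N$. For any factorization $\prod_{k=1}^{n-1}\delta_k=\sum_{i=1}^s d_i=:P$ into positive integers, the AM--GM inequality gives $\sum_{k=1}^{n-1}\delta_k\ge (n-1)P^{1/(n-1)}$, whence $\sum_{k=1}^{n-1}\delta_k-n+1\ge (n-1)\big(P^{1/(n-1)}-1\big)$; minimizing over all such factorizations yields $N\ge (n-1)\big(P^{1/(n-1)}-1\big)$. I do not anticipate a serious obstacle, since the argument is essentially a matching of hypotheses; the one point deserving care is verifying that the hypotheses of Corollary~\ref{cor:1degreeversion} genuinely hold, namely that $X$ is both reduced and arithmetically Gorenstein. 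Reducedness holds because $X$ is the union of the integral curves $X_i$, and the arithmetically Gorenstein property is exactly the content of ``$X$ is a complete intersection''; a complete intersection is moreover Cohen--Macaulay, hence unmixed, so it has no embedded components and its primary decomposition is the reduced decomposition $X=\bigcup_i X_i$, consistent with the setup of the preceding results.
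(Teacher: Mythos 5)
Your proposal is correct and follows essentially the same route as the paper: identify the complete intersection as arithmetically Gorenstein of regularity $\sum_k\delta_k-n+2$, use B\'ezout to see that the actual factorization competes in the definition of $N$ (so $N\le r$), apply Corollary~\ref{cor:1degreeversion} with $D=d_s$, and convert connectivity into a minimum-degree bound. You additionally spell out details the paper leaves implicit (the AM--GM justification of the lower bound on $N$, the verification of reducedness, and the connectivity-implies-valency step), all of which are accurate.
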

\begin{proof}
If $X\subseteq \PP^n$ is defined by $n-1$ equations of degrees $\delta_1,\ldots ,\delta_{n-1}$, then it is arithmetically Gorenstein of regularity $\sum_{k=1}^{n-1}\delta_k-n+2$, and $\prod_{k=1}^{n-1}\delta_k=\sum_{i=1}^sd_i$. So $G(X)$ is $\lfloor \frac{N+\max_i\{d_i\}-1}{\max_i\{d_i\}}\rfloor$-connected by Corollary \ref{cor:1degreeversion}. In particular, each vertex of $G(X)$ has valency at least $\lfloor \frac{N+\max_i\{d_i\}-1}{\max_i\{d_i\}}\rfloor$.
\end{proof}

\begin{example}[27-lines]
With the notation of Corollary \ref{cor:ci}, if $s=27$, $d_i=1$ and $n=3$, then $N=10$. So if a union of 27 lines in $\PP^3$ happens to be a complete intersection, then each line must meet at least 10 of the others by Corollary \ref{cor:ci}. 

It is easy to see that the union of the 27 lines in a smooth cubic surface in $\PP^3$ is a complete intersection (the cubic cut out by a union of 9 planes); one can see that any of the 27 lines meet {\it exactly} with 10 of the others. In this sense Corollary \ref{cor:ci} is sharp.
\end{example}

In case the regularities of the irreducible components are quite diverse (for example, if one component has much larger regularity than all the other ones) the following sharpening of Theorem \ref{thm:1} could be convenient:

\begin{theorem} \label{thm:1general} Let $r$ be a positive integer. Let $X\subset \PP ^n$ be an arithmetically Gorenstein scheme of regularity $r+1$ and $X=X_1\cup X_2 \cup \ldots \cup X_s$ be the primary decomposition of $X$. Then the dual graph $G(X)$ is $f(r)$-connected, where 
\[f(r) \defeq \max\; \{i \in \mathbb{N}  \textrm{ s.t. for all } B \subset [s] \textrm{ of cardinality } i-1, \textrm{ one has } \sum_{j\in B} \reg X_j \le r -1 \} . \]
\end{theorem}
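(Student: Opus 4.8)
The plan is to follow the same strategy as the proof of Theorem~\ref{thm:1}, but replacing the uniform bound $|B|\cdot r'$ on $\reg X_B$ with the sharper subadditivity estimate coming directly from Lemma~\ref{lem:EGrevisited}. As before, I would first reduce to the case where $X$ is a curve: taking $\dim X-1$ general hyperplane sections preserves both the arithmetically Gorenstein property and the global regularity $r+1$, and preserves the dual graph by \cite[Lemma 2.12]{BV}. The only point requiring a little care is that the quantity $f(r)$ must not decrease under this reduction. Since a general hyperplane section can only decrease the regularity of each component (by \cite[Lemma 4.8 + Corollary 4.10]{Eisenbud}), the condition ``$\sum_{j\in B}\reg X_j \le r-1$'' becomes \emph{easier} to satisfy after sectioning, so the invariant $f(r)$ for the sectioned scheme is at least as large as the original. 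Hence it suffices to prove the curve case, and any connectivity we establish there transfers back up.

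Once $X$ is a curve, I would set $k \defeq f(r)$ and let $B\subset[s]$ be an arbitrary subset with $|B| < k$, i.e.\ $|B|\le k-1$. Writing $A=[s]\setminus B$, $X_A=\bigcup_{i\in A}X_i$ and $X_B=\bigcup_{i\in B}X_i$, the goal is again to show that $X_A$ is connected, which shows that deleting fewer than $k$ vertices cannot disconnect $G(X)$. Exactly as in Theorem~\ref{thm:1}, geometric linkage through the arithmetically Gorenstein scheme $X$ yields Schenzel's graded isomorphism $H^1_{\mathfrak m}(S/I_{X_A})\cong H^1_{\mathfrak m}(S/I_{X_B})^{\vee}(2-r)$, so the connectedness of $X_A$ (equivalently, $H^1_{\mathfrak m}(S/I_{X_A})_0=0$) follows from $H^1_{\mathfrak m}(S/I_{X_B})_{r-2}=0$, which in turn is guaranteed by $\reg S/I_{X_B}\le r-2$, i.e.\ $\reg X_B\le r-1$.

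The decisive step is estimating $\reg X_B$. By part~(1) of Lemma~\ref{lem:EGrevisited} applied to the primary decomposition $X_B=\bigcup_{j\in B}X_j$, we have
\[
\reg X_B \;\le\; \sum_{j\in B}\reg X_j.
\]
Now $|B|\le k-1=f(r)-1$, and by the very definition of $f(r)$ as a maximum, \emph{every} subset of cardinality $f(r)-1$ satisfies $\sum_{j\in B}\reg X_j\le r-1$; in particular our $B$ (of cardinality at most $f(r)-1$) does. Combining the two inequalities gives $\reg X_B\le r-1$, hence $\reg S/I_{X_B}\le r-2$, completing the argument. This recovers Theorem~\ref{thm:1} as the special case where all $\reg X_j\le r'$, since then any $B$ with $|B|\le\lfloor\frac{r+r'-1}{r'}\rfloor-1$ satisfies $\sum_{j\in B}\reg X_j\le(\lfloor\frac{r+r'-1}{r'}\rfloor-1)r'\le r-1$, so $f(r)\ge\lfloor\frac{r+r'-1}{r'}\rfloor$.

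I do not anticipate a serious obstacle here, since the algebraic machinery (geometric linkage, Schenzel duality, the subadditivity Lemma) is already assembled in Theorem~\ref{thm:1}; the only genuinely new ingredient is the combinatorial observation that the worst case among subsets $B$ is correctly captured by the maximum defining $f(r)$. The subtlest point is verifying that $f(r)$ behaves well under general hyperplane section, and I would be careful to state that the regularities of components can only drop, so that the set of ``admissible'' cardinalities can only grow, keeping $f(r)$ nondecreasing under the reduction.
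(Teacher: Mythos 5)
Your proposal is correct and follows exactly the route the paper intends: the paper's own ``proof'' of Theorem~\ref{thm:1general} is literally the statement that one repeats the proof of Theorem~\ref{thm:1}, replacing the uniform bound $\reg X_B \le |B|\cdot r'$ with the bound $\reg X_B \le \sum_{j\in B}\reg X_j \le r-1$ coming from Lemma~\ref{lem:EGrevisited} and the definition of $f(r)$, which is precisely what you do (including the correct observation that $f(r)$ can only grow under general hyperplane sections, and the implicit use of the fact that $\reg X_j \ge 1$ so the defining condition for sets of cardinality $f(r)-1$ also covers all smaller sets $B$).
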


The proof is the same of Theorem \ref{thm:1}.  Of course, the analogous sharpening of Corollary \ref{cor:1degreeversion} holds.

\begin{remark}
Suppose an arithmetically Gorenstein scheme $X$ has at least two primary components. It is natural to ask whether the regularity of the components is bounded above by $\reg X$. The answer is:
\begin{compactitem}
\item positive for the components $Q$ that are arithmetically Cohen--Macaulay (one has $\reg Q \le \reg X -1$);
\item ``very'' negative in general, even if $X$ is a complete intersection, as the example below shows.
\end{compactitem}
Let us consider the complete intersection 
\[I = (x_0 x_1^{2000}-x_4 x_2^{2000}, \; \: x_0 x_1 x_3+x_4^3, \: \; x_0^3+2x_1 x_3^2)   
\; \subset \; S \, = \, \mathbb{Q}[x_0, \ldots, x_4].
\]
With Macaulay2 \cite{macaulay2} we can see that the primary decomposition of $I$ consists of three ideals:
\[Q_1=\left(
\begin{array}{l}
{x}_{0} {x}_{1} {x}_{3}+{x}_{4}^{3}, \:\;
{x}_{0}^{3}+2 {x}_{1} {x}_{3}^{2}, \:\;
{x}_{2}^{2000} {x}_{3}+{x}_{1}^{1999} {x}_{4}^{2}, \:\;
{x}_{0} {x}_{1}^{2000}-{x}_{2}^{2000} {x}_{4}, \\
{x}_{0}^{2} {x}_{2}^{4000}-2 {x}_{1}^{4000} {x}_{3} {x}_{4}, \:\;
{x}_{0} {x}_{2}^{6000}-2 {x}_{1}^{6000} {x}_{3}, \:\;
{x}_{2}^{10000}+2 {x}_{1}^{9999} {x}_{4} 
\end{array}\right),
\]
\[ Q_2=\left({x}_{4},{x}_{1}^{2},{x}_{0} {x}_{1},{x}_{0}^{3}+2 {x}_{1} {x}_{3}^{2}\right), 
\phantom{lalalalalalalalalalalalalalalalalalalalalalalaa}
\]
\[ Q_3= \left(
\begin{array}{l}
{x}_{3} {x}_{4}^{2}, \:\:
{x}_{3}^{2} {x}_{4}, \:\: 
{x}_{0} {x}_{3} {x}_{4}, \:\:
{x}_{3}^{3}, \:\:
{x}_{0} {x}_{3}^{2}, \:\:
{x}_{0} {x}_{1} {x}_{3}+{x}_{4}^{3},\:\:
{x}_{0}^{2} {x}_{3},\:\:
{x}_{0}^{3}+2 {x}_{1} {x}_{3}^{2},\\
{x}_{0}^{2} {x}_{4}^{2}, \:\:
{x}_{0} {x}_{1}^{2000}-{x}_{2}^{2000} {x}_{4},\:\:
{x}_{0} {x}_{2}^{2000}
      {x}_{3}+{x}_{0} {x}_{1}^{1999} {x}_{4}^{2}
      \end{array}\right).
      \]
The dual graph is $K_3$. While the regularity of $I$ is $2005$, the regularities of the $Q_i$'s are $10\, 000$, $3$ and $2003$, respectively. In fact, $S/Q_1$ is not Cohen--Macaulay.
\end{remark}

\subsection{Sharpness of the bound} \label{sec:sharpness}
The $\lfloor{\frac{r+r'-1}{r'}}\rfloor$ connectivity bound given by Theorem \ref{thm:1} is sharp for $r'=1$, as already noticed in \cite{BV}. In this section we prove that such bound is sharp also for some $r' > 1$. This answers a question by Michael Joswig (personal communication). 
To this end, we focus on arithmetically Gorenstein curves in $\PP^4$, for which a routine to generate examples is available. 

Let $k \ge 2$ be an integer and let $n=2k+1$. Let $M$ be an $n \times 
n$ upper triangular matrix of homogenous polynomials of degree $d$ in $\mathbb{Q}[x_0, \ldots, x_4]$. 
Let $A=M - M^t$. By definition, $A$ is skew-symmetric. Let $I$ be the ideal generated by the pfaffians of size $n-1$ of $A$. By a result of Eisenbud--Buchsbaum, if $\operatorname{height} I = 3$ then $S/I$ is Gorenstein of regularity $dn-3$. Of course, the ideal $I$ is completely determined by the matrix $M$, and for this reason we will denote it by $I_M$.

\begin{example} \label{ex:1}
Consider the upper triangular matrix
\[M = \left(
\begin{array}{ccccc}
	0 & x_1^2+x_4 x_3 & 0 & 0 &x_3 x_5+x_1^2+x_3 x_4 \\
	0 & 0 & x_5 x_4-x_2^2 & 0 & x_1 x_3-x_4^2 \\
	0 & 0 & 0 & x_3^2+x_5 x_1 & x_5^2+x_3 x_1-x_2 x_4\\
	0 & 0 & 0 & 0 & x_5 x_2-x_3^2 \\
	0 & 0 & 0 & 0 & 0
\end{array} \right).
\]
Using Macaulay2 \cite{macaulay2}, we computed the ideal $I_M$. The regularity of $S/I_M$  is $2 \cdot 5 - 3= 7$. There are eight primary components of $I$, of regularities 
\[3, \; 2, \; 2, \; \mathbf{6}, \; 3, \; 4, \; 4, \textrm{ and } 3,\] with respect to the default ordering used by the software, namely, the graded reverse lexicographic order.
So the maximum regularity is $6$. According to Theorem \ref{thm:1}, the dual graph is  $\lfloor{\frac{7+6-1}{6}}\rfloor$-connected, that is, $2$-connected. In fact, the dual graph has $8$ vertices and edge list 
\[G_5 =\{ 12, 14, 23, 24, 27, 34, 35, 36, 37, 38, 45, 46, 47, 48, 56, 57, 58, 67, 68, 78\}.\]
Indeed this $G_5$ is not $3$-connected, as the vertex labeled by $1$ has degree two. So the bound given by Theorem \ref{thm:1} is best possible on this example.
\end{example}

\begin{example}\label{ex:2}
Consider the upper triangular matrix
\[M' = \left(
\begin{array}{ccccc}
	0 & x_1^2+x_4 x_3 & 0 & 0 &x_3 x_5+x_1^2+x_3 x_4 \\
	0 & 0 & x_5 x_4-x_2^2 & 0 & x_1 x_3+x_4^2 \\
	0 & 0 & 0 & x_3^2+x_5 x_1 & x_5^2+x_3 x_1-x_2 x_4\\
	0 & 0 & 0 & 0 & x_5 x_2-x_3^2 \\
	0 & 0 & 0 & 0 & 0
\end{array} \right).
\]
This is almost identical to the matrix $M$ of Example~\ref{ex:1}! The only change is a minus sign that has become a plus, in the fifth binomial of the second row. Again using Macaulay2 \cite{macaulay2}, we computed the ideal $I_{M'}$ associated. The regularity of $S/I_{M'}$  is $7$; the eight primary components of $I_{M'}$ have regularities 
\[3, \; 2, \; 2, \; \mathbf{7}, \; 3, \; 4, \;4, \textrm{ and } 3.\] 
So the maximum regularity is now $7$. Note that with respect to Example \ref{ex:1} all the regularities are unchanged, except the maximal one, which has increased by one. This time, with Theorem \ref{thm:1} we can only say that the dual graph is  $\lfloor{\frac{7+7-1}{7}}\rfloor$-connected, that is, $1$-connected. And as a matter of fact, the dual graph is now 
\[G_6=\{14, 23, 24, 27, 34, 35, 36, 37, 38, 45, 46, 47, 48, 56, 57, 58, 67, 68, 78 \},\]
which is $1$-connected, but not $2$-connected, because the vertex labeled by $1$ has now degree one. (In fact, $G_6$ is the $G_5$ of  Example \ref{ex:1} with the edge $12$ deleted.) 
\end{example}

These examples show that the bound given by Theorem \ref{thm:1} is best possible, and quite sensitive to minimal variations in the regularity. Of course, the sharpness of a bound on some examples, does not imply that the bound is sharp on \emph{all} examples. There are plenty of arithmetically Gorenstein curves whose dual graph is much more connected than what Theorem \ref{thm:1} enables us to see.

\section*{Appendix}
Here we show that some dual graphs of (affine or projective) line arrangements are not dual graphs of any simplicial complex of any dimension. The proof of this fact is quite elementary, but we decided to include it for completeness.
Let us start with some easy notation. By convention, 
\begin{compactitem}[--]
\item the $(-1)$-simplex is just the empty set;  
\item the join of the empty set with a complex $C$, is $C$ itself.
\end{compactitem}
\begin{definition}[$d$-star, $d$-windmill]
Let $d \ge 1$. \\A \emph{$d$-star} is the complex obtained by joining the graph $K_3$ with a $(d-2)$-simplex. \\A \emph{$d$-windmill} is the complex obtaned by joining the graph $K_{1,3}$ with a $(d-2)$-simplex. (Equivalently, a $d$-windmill is the join of 3 disjoint points with a $(d-1)$-simplex). 
\end{definition}

\begin{figure}[hbtp] \label{fig:1}
\centering
\includegraphics[scale=.6]{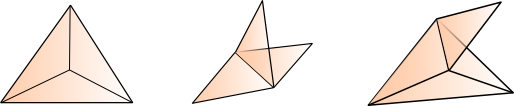}
\caption{The $2$-star, the $2$-windmill, and the unique pure $2$-complex with the diamond graph as dual.}
\end{figure}

\begin{lemma} \label{lem:StarOrWindmill}
For any fixed positive integer $d$, there are exactly \emph{two} pure $d$-complexes with dual graph $K_3$, namely, the $d$-star and the $d$-windmill.
\end{lemma}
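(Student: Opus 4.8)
The plan is to reduce everything to a purely combinatorial statement about three finite sets. Since a pure $d$-complex is the downward closure of its facets, and each facet is a $d$-simplex (a set of $d+1$ vertices), the complex is completely determined by its collection of facets. Having dual graph $K_3$ means precisely that there are exactly three facets $F_1,F_2,F_3$, pairwise adjacent; and two distinct facets are adjacent exactly when their intersection is a $(d-1)$-face, i.e.\ when $|F_i\cap F_j|=d$. So the lemma becomes: classify triples of $(d+1)$-element sets with all three pairwise intersections of size exactly $d$, up to relabeling the ground set.

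First I would localize the analysis inside one facet, say $F_1$. Because $|F_1\cap F_2|=d$, the ridge $F_1\cap F_2$ is obtained from $F_1$ by deleting a single vertex $p_2$; likewise $F_1\cap F_3=F_1\setminus\{p_3\}$. The entire argument then splits on whether $p_2=p_3$ or $p_2\ne p_3$, and I expect this dichotomy to correspond exactly to the windmill and the star.

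In the first case $p_2=p_3$, the set $R:=F_1\setminus\{p_2\}$ is a common ridge: it lies in all three facets, and by the size count $R=F_1\cap F_2=F_1\cap F_3=F_2\cap F_3$. Each $F_i$ is then $R$ together with one extra vertex $w_i$, the $w_i$ are forced to be distinct (else two facets coincide), and the resulting complex is the join of the three isolated points $\{w_1,w_2,w_3\}$ with the $(d-1)$-simplex on $R$---that is, the $d$-windmill. In the second case $p_2\ne p_3$, put $G:=F_1\setminus\{p_2,p_3\}$, a set of size $d-1$. Writing $F_2=(F_1\setminus\{p_2\})\cup\{q_2\}$ and $F_3=(F_1\setminus\{p_3\})\cup\{q_3\}$ with $q_2,q_3\notin F_1$, the crucial remaining constraint is $|F_2\cap F_3|=d$ (the third edge of $K_3$): since $F_2\cap F_3$ automatically contains $G$ but neither $p_2$ nor $p_3$, the only way to reach size $d$ is to have $q_2=q_3=:q$. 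Then the three facets are $G\cup\{p_2,p_3\}$, $G\cup\{p_3,q\}$ and $G\cup\{p_2,q\}$, which is exactly the join of the triangle $K_3$ on $\{p_2,p_3,q\}$ with the $(d-2)$-simplex on $G$---the $d$-star.

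The main work is really the bookkeeping in the second case: one must use \emph{all three} adjacencies of $K_3$, and in particular it is the third adjacency $|F_2\cap F_3|=d$ that rules out a ``generic'' configuration and forces $q_2=q_3$. Once this is in place the two cases are manifestly disjoint---they are distinguished by $|F_1\cap F_2\cap F_3|$, which equals $d$ for the windmill and $d-1$ for the star---and since each case pins down the facets uniquely up to relabeling, there are exactly the two asserted complexes. Finally I would record the trivial converse, namely that both the $d$-star and the $d$-windmill are pure $d$-complexes whose three facets pairwise meet in a ridge, so that each indeed has dual graph $K_3$.
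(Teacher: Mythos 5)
Your proof is correct, but it follows a genuinely different route from the paper's. You classify the three facets directly as finite sets: writing $F_1\cap F_2=F_1\setminus\{p_2\}$ and $F_1\cap F_3=F_1\setminus\{p_3\}$, you split on whether $p_2=p_3$, and in the nondegenerate case you invoke the third adjacency $|F_2\cap F_3|=d$ to force the two new vertices $q_2,q_3$ to coincide. The paper argues structurally instead: by inclusion--exclusion on vertex counts, the triple intersection $I=\Sigma_1\cap\Sigma_2\cap\Sigma_3$ satisfies $\dim I\ge d-2$ (using that the complex is not a single simplex, so it has at least $d+2$ vertices), hence the link of $I$ is a complex of dimension at most $1$ whose dual graph is still $K_3$, so it is either $K_3$ itself or three disjoint points; since the whole complex is the join of this link with the simplex on $I$, it is a $d$-star or a $d$-windmill. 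The two arguments hinge on the same dichotomy---your cases $p_2=p_3$ and $p_2\neq p_3$ are exactly the paper's cases $|I|=d$ and $|I|=d-1$---but the mechanics differ: the paper's link-and-join reduction to dimension at most $1$ is shorter and exploits machinery (dual graphs are invariant under joins with simplices) that also yields the easy direction for free and is reused later in the paper's Lemma on the diamond graph, whereas your argument is more elementary and self-contained, requiring nothing about links or joins beyond unwinding definitions. Both proofs correctly record the converse, namely that the $d$-star and the $d$-windmill do have dual graph $K_3$.
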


\begin{proof} The dual graph does not change under taking cones, and in particular, it does not change under taking joins with simplices. Hence, the $d$-star and the $d$-windmill both have $K_3$ as dual graph (because both $K_3$ and $K_{1,3}$ have $K_3$ as dual graph).  Conversely, let $C$ be a pure $d$-complex with dual graph $K_3$; we claim that $C$ is either a $d$-star or a $d$-windmill. 
If $d=1$, the claim is clear. So assume $d \ge 2$. Let $\Sigma_1$, $\Sigma_2$ and $\Sigma_3$ be the $d$-simplices of $C$. Set 
\[I =  \Sigma_1 \cap \Sigma_2  \cap \Sigma_3.\] 
Let us denote by $c$ (respectively, by $i$) the total number of vertices of $C$ (respectively, of $I$).
Since two of the $\Sigma_i$ have exactly $d$ vertices in common, by the inclusion-exclusion formula we have  
\[ c = 3 (d+1) - 3d + i = 3 + (\dim I +1). \]
But $c \ge d+2$, otherwise $C$ would consist of a single simplex. So $\dim I \ge d-2$ and therefore $\dim \operatorname{link}(I,C) \le 1$.
So $\operatorname{link}(I,C)$ is a graph with dual graph $K_3$; hence, it must be either $K_3$ itself, or a disjoint union of three points. Since $C$ is the join of $\operatorname{link}(I,C)$ with a simplex, the claim follows.
\end{proof}

\begin{lemma} \label{lem:UniqueDual}
Let $G$ be the \emph{diamond graph}, that is, $K_4$ minus an edge. In any fixed dimension $d \ge 1$, there is exactly \emph{one} pure simplicial $d$-complex with dual graph $G$. 
This complex is obtained from a $d$-star by glueing in a further $d$-simplex to one of the internal $(d-1)$-faces of the $d$-star. In particular, the two non-adjacent $d$-simplices of the complex share exactly $d-1$ vertices.
\end{lemma}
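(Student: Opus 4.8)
The plan is to mimic the structure of the proof of Lemma~\ref{lem:StarOrWindmill}, but now with four $d$-simplices instead of three, exploiting the fact that the diamond graph $G$ is obtained from $K_3$ by adding one vertex adjacent to two of the three existing vertices. First I would label the vertices of $G$ so that $4$ is the vertex of degree $2$ (the apex of the missing edge), say with edges $12,13,23,14,24$ and missing edge $34$; this matches the notation whereby $\Sigma_3$ and $\Sigma_4$ are the two non-adjacent simplices. Denote the four $d$-simplices by $\Sigma_1,\Sigma_2,\Sigma_3,\Sigma_4$, so that $\Sigma_i\cap\Sigma_j$ has dimension $d-1$ precisely when $\{i,j\}$ is an edge of $G$, and $\Sigma_3\cap\Sigma_4$ has dimension $\le d-2$.

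Next I would set $I=\Sigma_1\cap\Sigma_2\cap\Sigma_3\cap\Sigma_4$ and repeat the inclusion–exclusion bookkeeping, but a cleaner route is to restrict attention to $\{\Sigma_1,\Sigma_2,\Sigma_3\}$ first. By Lemma~\ref{lem:StarOrWindmill}, the subcomplex $C'$ generated by $\Sigma_1,\Sigma_2,\Sigma_3$ is either a $d$-star or a $d$-windmill, since its dual graph is $K_3$. The windmill case should be excluded: in a windmill the three $(d-1)$-faces shared by pairs are $K_{1,3}$-arranged, and one checks that gluing a fourth simplex $\Sigma_4$ meeting exactly $\Sigma_1$ and $\Sigma_2$ in $(d-1)$-faces, but meeting $\Sigma_3$ only in dimension $\le d-2$, forces an incompatible vertex count. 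So $C'$ is a $d$-star, which after joining off a $(d-2)$-simplex is the cone over $K_3$; its internal $(d-1)$-faces are exactly the three faces $\Sigma_i\cap\Sigma_j$.

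Then I would attach $\Sigma_4$. Since $\{1,4\}$ and $\{2,4\}$ are edges but $\{3,4\}$ is not, $\Sigma_4$ meets $\Sigma_1$ and $\Sigma_2$ each in a $(d-1)$-face, and meets $\Sigma_3$ in dimension $\le d-2$. The key local computation is that, within a $d$-star, the only $(d-1)$-face lying in both $\Sigma_1$ and $\Sigma_2$ is $\Sigma_1\cap\Sigma_2$; hence $\Sigma_4$ must share that face with the star, i.e.\ $\Sigma_4$ is glued along the internal $(d-1)$-face $\Sigma_1\cap\Sigma_2$. This pins down $\Sigma_4$ up to its single new vertex, giving uniqueness, and a direct count then shows $\Sigma_3\cap\Sigma_4$ has exactly $d-1$ vertices (the common facet minus the directions separating $\Sigma_3$ from $\Sigma_4$), establishing the final assertion.

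The main obstacle I anticipate is ruling out the windmill case and, more delicately, verifying that $\Sigma_4$ cannot attach to the star in some unexpected way that still realizes $G$ — for instance, attaching so that $\Sigma_4$ accidentally meets $\Sigma_3$ in a $(d-1)$-face (creating the forbidden edge $34$) or fails to meet one of $\Sigma_1,\Sigma_2$ in codimension $1$. Handling this cleanly will likely again go through $\operatorname{link}(I,C)$: after identifying the common intersection $I$ of all four simplices, one reduces to the low-dimensional case where $\operatorname{link}(I,C)$ is a graph with dual graph $G$, and then argues by hand that the only such pure $1$-complex is the path of three edges meeting the diamond pattern — i.e.\ four edges sharing structure forcing the claimed gluing. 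The arithmetic of the inclusion–exclusion vertex count is routine, so the real care is in the combinatorial case analysis that the windmill and the wrong gluings are impossible.
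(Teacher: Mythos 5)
Your overall framing (apply Lemma \ref{lem:StarOrWindmill} to a triangle of the diamond, then analyze how the fourth simplex attaches) matches the paper's, but your plan hinges on a step that is simply false: the windmill case for the triple $\{\Sigma_1,\Sigma_2,\Sigma_3\}$ \emph{cannot} be excluded. With your labelling (edges $12,13,23,14,24$, missing edge $34$), take $d=2$, vertices $t,b,v_1,v_2,v_3$, and facets
\[
\Sigma_1=\{v_1,b,t\},\qquad \Sigma_2=\{v_2,b,t\},\qquad \Sigma_3=\{v_3,b,t\},\qquad \Sigma_4=\{v_1,v_2,t\}.
\]
Here $\{\Sigma_1,\Sigma_2,\Sigma_3\}$ is a $2$-windmill (the join of the three points $v_1,v_2,v_3$ with the edge $\{b,t\}$), and yet the dual graph of the whole complex is exactly the diamond: $\Sigma_4$ meets $\Sigma_1$ in $\{v_1,t\}$, meets $\Sigma_2$ in $\{v_2,t\}$, and meets $\Sigma_3$ only in $\{t\}$. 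So there is no ``incompatible vertex count''; the windmill configuration is genuinely realized, and any attempt to derive a contradiction from it must fail. Note that this does not contradict the uniqueness asserted by the lemma: the displayed complex \emph{is} the unique diamond complex, being the $2$-star on $\{\Sigma_1,\Sigma_2,\Sigma_4\}$ with $\Sigma_3$ glued along its internal edge $\{b,t\}$. What breaks down is only your choice of which triple should carry the star structure.

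The paper's proof handles precisely this case by a different move: if $\{\Sigma_1,\Sigma_2,\Sigma_3\}$ is a windmill with central $(d-1)$-face $\sigma=\Sigma_1\cap\Sigma_2=\Sigma_1\cap\Sigma_3=\Sigma_2\cap\Sigma_3$, pass to the \emph{other} triangle of the diamond, $\{\Sigma_1,\Sigma_2,\Sigma_4\}$, and apply Lemma \ref{lem:StarOrWindmill} to it. That triple cannot be a windmill, since its central face would be a $(d-1)$-face contained in $\Sigma_1\cap\Sigma_2=\sigma$, hence equal to $\sigma$; but $\sigma\subseteq\Sigma_3$, so $\Sigma_4\supseteq\sigma$ would create the forbidden edge $34$. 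Hence $\{\Sigma_1,\Sigma_2,\Sigma_4\}$ is a $d$-star, and $\Sigma_3=v_3\ast\sigma$ is glued onto its internal face $\sigma$, which gives the claimed structure (and the count $|\Sigma_3\cap\Sigma_4|=d-1$). Separately, your star case has a smaller gap: from ``$\Sigma_4$ meets $\Sigma_1$ and $\Sigma_2$ in $(d-1)$-faces'' you cannot immediately conclude that these two faces coincide and equal $\Sigma_1\cap\Sigma_2$; a priori $\Sigma_4$ could share one facet with $\Sigma_1$ and a \emph{different} facet with $\Sigma_2$. This is ruled out by a short count: in that event $\Sigma_4$ would consist of $d+1$ of the $d+2$ vertices of the star, and each such choice either reproduces one of $\Sigma_1,\Sigma_2,\Sigma_3$ or contains $d$ vertices of $\Sigma_3$, again creating the forbidden edge. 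Your fallback via $\operatorname{link}(I,C)$ with $I$ the intersection of all four facets is also not free of charge: with four facets the inclusion--exclusion needs control of the triple intersections and of the unconstrained pair $\Sigma_3\cap\Sigma_4$, unlike the $K_3$ case. So the proposal is repairable, but as written its central case analysis is wrong, not merely incomplete.
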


\begin{proof}
Let $E$ be a complex with dual graph $\{12, 13, 23, 24, 34\}$.  Let $\Sigma_i$ be the facet of $C$ corresponding to $i$ ($i=1, \ldots, 4$). Let $C$ be the subcomplex of $E$ induced by the facets $\Sigma_1, \Sigma_2$ and $\Sigma_3$. By Lemma \ref{lem:StarOrWindmill}, $C$ is either a $d$-star or a $d$-windmill. 
If $C$ is a $d$-star, then $E$ must be obtained by attaching $\Sigma_4$ to the $(d-1)$-face $\Sigma_2 \cap \Sigma_3$ of $C$. In particular $\Sigma_1 \cap \Sigma_4 = \Sigma_1 \cap \Sigma_2 \cap \Sigma_3$ consists of $d-1$ vertices, and the claim is proven.
If instead $C$ is a $d$-windmill, by definition there are three vertices $v_1, v_2, v_3$ and a $(d-1)$-simplex $\sigma$ such that $\Sigma_i  = v_i * \sigma$ for each $i$. Consider the 
subcomplex $D$ of $E$ formed by $\Sigma_2, \Sigma_3$ and $\Sigma_4$. The dual graph of $D$ is $K_3$, so by Lemma \ref{lem:StarOrWindmill}, $D$ is either a $d$-star or a $d$-windmill; but it cannot be a $d$-windmill, otherwise the common intersection $\Sigma_2 \cap \Sigma_3 \cap \Sigma_4$ would have to be $\sigma$, which is contained in $\Sigma_1$: a contradiction, in $G$ there is no edge $14$. So $E$ is obtained from $D$ by glueing the simplex $\Sigma_1$ onto the internal face $\Sigma_2 \cap \Sigma_3$ of the $d$-star $D$.
\end{proof}

\begin{lemma} \label{lem:GraphExample}
No simplicial complex of any dimension has the graph $G_1$ of Figure 1 as dual graph.
\end{lemma}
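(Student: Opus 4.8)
The plan is to argue by contradiction: assume some simplicial complex $C$ has the graph $G_1$ of Figure 1 as its dual graph, and then pin down the facets of $C$ so rigidly that an edge is forced where $G_1$ has none. My first move is a reduction to the pure case. If two facets are adjacent in the dual graph they share a codimension-one face, so adjacent facets have equal dimension; since $G_1$ is connected, all five facets $F_1,\ldots,F_5$ of $C$ then have a common dimension $d$, and $C$ is a pure $d$-complex. Realizing $G_1$ thus amounts to choosing five $(d+1)$-element vertex sets $F_1,\ldots,F_5$ with $|F_i\cap F_j|=d$ exactly when $\{i,j\}$ is an edge of $G_1$, and $|F_i\cap F_j|\le d-1$ otherwise. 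I would keep this set-theoretic dictionary in view throughout: an edge means the two facets differ by a single vertex swap, while a non-edge means they differ by at least two.

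Next I would exploit the rigidity already packaged in Lemmas \ref{lem:StarOrWindmill} and \ref{lem:UniqueDual}. Locating a triangle $\{a,b,c\}$ in $G_1$, Lemma \ref{lem:StarOrWindmill} forces the facets $F_a,F_b,F_c$ to form either a $d$-star or a $d$-windmill, and these two cases have very different cores: a shared $(d-1)$-set in the star case, a shared $d$-set in the windmill case. Even better, for each induced diamond $\{i,j,k,l\}$ of $G_1$ Lemma \ref{lem:UniqueDual} removes all remaining freedom, placing the four facets in its unique configuration and forcing the two non-adjacent facets to share \emph{exactly} $d-1$ vertices. I would enumerate the triangles and induced diamonds of $G_1$ and record the resulting constraints: which triples are stars versus windmills, and which non-adjacent pairs are pinned to overlap in a prescribed $(d-1)$-face.

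The contradiction then comes from feeding these constraints into the intersection-count equations $|F_i\cap F_j|=d$ along the edges. Having fixed the core of one triangle, I would solve for how each of the remaining facets meets that triangle: the equations force each such facet either to repeat the same star or windmill pattern, or to be impossible outright by a cardinality count, since a facet cannot hold more than $d+1$ vertices. In every surviving branch two facets that are \emph{non-adjacent} in $G_1$ are driven to intersect in a full $d$-set, hence to be adjacent, or else to coincide; either outcome contradicts the edge data of $G_1$. Carrying this through both the star branch and the windmill branch for the chosen triangle closes the argument, and the same scheme will dispatch $G_2$.

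The main obstacle I anticipate is the combinatorial bookkeeping. One must track, across the several overlapping diamonds of $G_1$, \emph{which} $(d-1)$-faces are being shared, and then check that the rigidity propagated from distinct diamonds cannot be reconciled. The case split (star versus windmill) at each triangle multiplies the branches, so the real work is in organizing the enumeration so that each branch collapses to the same clean contradiction — a forced non-edge behaving like an edge. Framing the whole computation through the ``$(d+1)$-sets with intersection $d$ along edges'' dictionary is what keeps this bookkeeping uniform across all dimensions $d$ at once.
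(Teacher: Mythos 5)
Your proposal follows the paper's proof essentially verbatim: both arguments pass to the unique induced diamond $\{1,2,3,4\}$ of $G_1$, invoke Lemmas \ref{lem:StarOrWindmill} and \ref{lem:UniqueDual} to pin down those four facets rigidly (so that the non-adjacent pair $\Sigma_1$, $\Sigma_4$ shares exactly $d-1$ vertices), and then obtain the contradiction by showing that a fifth facet adjacent to both $\Sigma_1$ and $\Sigma_4$ but to neither $\Sigma_2$ nor $\Sigma_3$ must either coincide with, or be adjacent to, $\Sigma_2$ or $\Sigma_3$. The only difference is that you leave the final step schematic where the paper carries out the explicit four-case swap analysis (a)--(d); since your vertex-swap dictionary makes that check mechanical and your predicted outcome is exactly what it yields, the plan is sound and matches the paper's route.
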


\begin{proof}
By contradiction, let $F$ be a $d$-dimensional complex with dual graph 
\[ G_1 = \{12, 13, 15, 23, 24, 34, 45\}.\] 
Let $\Sigma_i$ be the facet of $F$ corresponding to $i$ ($i=1, \ldots, 5$), and let $E$ be the subcomplex induced by the first four facets. The dual graph of $E$ coincides with the diamond graph $G$ of Lemma \ref{lem:UniqueDual}, so we know how $E$ looks like. In particular $\Sigma_1$ and $\Sigma_4$ have $d-2$ vertices in common.  In the rest of the proof we give a formal reason why the way the pattern of adjacencies of the fifth simplex yields a contradiction. (We invite the reader to check this in Figure \ref{fig:1}, by trying to place a fifth triangle $\Sigma_5$ in the rightmost complex so that it has edges in common only with the two triangles that are not adjacent to one another.) 

Let us agree on some notation first. 
 Without loss of generality, we can assume the subcomplex induced by the first three facets $\Sigma_1, \Sigma_2, \Sigma_3$ is a $d$-star, and the complex induced by $\Sigma_2, \Sigma_3, \Sigma_4$ is a $d$-windmill. (If not, we switch the labels of $\Sigma_1$ and $\Sigma_4$.) By definition of windmill, if 
$\sigma = \Sigma_2 \cap \Sigma_3$, there are three vertices $v_2, v_3, v_4$ such that
$\Sigma_i$ ($i=2,3,4$) is of the form $v_i \ast \sigma$. Also, there is exactly one vertex $v$ of $\sigma$ that does \emph{not} belong to $\Sigma_1$. With this notation, the list of vertices of 
$\Sigma_1$ and that of $\Sigma_4$ have an overlap of exactly $d-1$ vertices. In fact, the lists differ only in the following:
\begin{compactitem}
\item $v_2$ and $v_3$ only belong to $\Sigma_1$;
\item $v$ and $v_4$ only belong to $\Sigma_4$. 
\end{compactitem}
But the new simplex $\Sigma_5$ is adjacent to \emph{both} $\Sigma_1$ and $\Sigma_4$.  This means that when we compare the list of vertices of $\Sigma_1$ with the list of vertices of $\Sigma_5$, we see only one change, and this change is one of the following four: 
\begin{compactenum}[(a)]
\item $v_2$ is replaced by $v$; 
\item $v_2$ is replaced by $v_4$;
\item $v_3$ is replaced by $v$;
\item $v_3$ is replaced by $v_4$.
\end{compactenum}
In cases (a) (resp. (c) ), $\Sigma_5$ has then the same set of vertices of $\Sigma_3$ (resp. of $\Sigma_2$), a contradiction. In cases (b) (resp. (d)), $\Sigma_5$ would be adjacent to $\Sigma_3$ (resp. to $\Sigma_2$), also a contradiction.
\end{proof}

Similarly one can show that the graph $G_2$ of Figure 1 cannot be the dual graph of any simplicial complex, either. We leave it to the reader to construct two projective line arrangements with the graphs $G_1$ and $G_2$ as dual.

\subsection*{Acknowledgments}
Many thanks to Eric Katz for very valuable suggestions; thanks also to Tony Iarrobino for discussions. Part of the research was conducted during a visit of the first author at Northeastern University, for which the author wishes to thank Alex Suciu.

\begin{small}

\end{small}
\end{document}